\documentclass[preprint]{elsarticle}
\usepackage{lineno,hyperref}
\usepackage{amsmath}
\usepackage{amsthm}
\usepackage{amssymb}
\usepackage{color}
\usepackage{comment}


\usepackage{graphicx}
\usepackage{enumitem}
\usepackage{comment}

\newtheorem{theorem}{Theorem}[section]
\newtheorem{remark}[theorem]{Remark}
\newtheorem{proposition}[theorem]{Proposition}

\newtheorem{corollary}[theorem]{Corollary}

\begin{document}

\begin{frontmatter}

\title{On graphs with eigenvectors in $\{1, -1, 0\}$ and the max $k$-cut problem}

\author[Jorgeaddress]{Jorge Alencar}
\ead{jorgealencar@iftm.edu.br}
\address[Jorgeaddress]{Instituto Federal de Educa\c{c}\~{a}o, Ci\^{e}ncia e Tecnologia do Tri\^angulo Mineiro, Brasil}

\author[Leoaddress1]{Leonardo de Lima}
 \ead{leonardo.delima@ufpr.br}
\address[Leoaddress1]{Departamento de Administração Geral e Aplicada, Universidade Federal do Paran\'a, Brasil}

\author[Vladoaddress]{Vladimir Nikiforov}
 \ead{vnikifrv@memphis.edu}
 \address[Vladoaddress]{Department of Mathematical Sciences, University of Memphis, Memphis, TN 38152, USA}

\begin{abstract}
In this paper, we characterize all graphs with eigenvectors of the signless Laplacian and adjacency matrices with components equal to $\{- 1, 0, 1\}.$ We extend the graph parameter max $k$-cut to square matrices and prove a general sharp upper bound, which implies upper bounds on the max $k$-cut of a graph using the smallest signless Laplacian eigenvalue, the smallest adjacency eigenvalue, and the largest Laplacian eigenvalue of the graph. In addition, we construct infinite families of extremal graphs for the obtained upper bounds.
\end{abstract}

 \begin{keyword}
 eigenvector \sep  maximum $k$-cut problem \sep signless Laplacian \sep adjacency matrix.
 \MSC[2010] 05C50, 05C35
 \end{keyword}

 \end{frontmatter}

\section{Introduction}\label{intro}

Throughout this paper, we consider $G=(V,E)$ a graph with \textcolor{blue}{vertex set $V=\{v_{1}, v_{2}, \ldots, v_{n}\}$} and edge set $E$ such that  $|V|=n$ and $|E|=m.$ 
We write $A(G)=A$ for the adjacency matrix of $G,$ where $a_{ij}= 1$ if $e_{ij} \in E$ and $a_{ij}=0$ otherwise. The diagonal matrix $D(G)$ is given by the row-sums of $A,$ i.e., the degrees of $G.$ As usual, $L(G)=D(G) - A(G)$, denotes the Laplacian matrix of $G$, and $Q(G)=D(G)+A(G)$, denotes the signless Laplacian matrix of $G.$

In \cite{wilf}, Wilf posed the following question ``Which graphs have eigenvectors with entries solely $\pm 1$?''. Some recent papers have studied this subject in different matrices associated with a graph. In \cite{dragan2016}, Stevanovic showed that the problem of finding such graphs is NP-Hard. In \cite{caputo2019}, Caputo, Khames, and A. Knippel described all graphs whose Laplacian matrix has eigenvectors with entries only $\pm 1.$ Alencar and de Lima, in Theorem 4.1 of \cite{AL21}, solved the question raised by Wilf by presenting necessary and sufficient conditions to a graph $G$ has an eigenvector with only entries $\{-1, 1\}.$ In addition, Caputo,  Khames, and Knippel  \cite{caputo2019} characterized all graphs with Laplacian eigenvectors entries equal to $-1$, 0, or 1. 

In this paper, we give a complete characterization of all graphs with an eigenvector to the signless Laplacian and the adjacency matrix that has entries in $\{-1, 0, 1\}$. Our results point out an interesting relation of the graphs with this kind of eigenvectors to the degree sequence of the graph. We also characterize all graphs with eigenvectors of the signless Laplacian and adjacency matrix with entries in $\{c_1, -c_2, 0\}$ for any positive real numbers $c_1$ and $c_2.$

The \emph{maximum }$k$\emph{-cut} of $G,$ denoted by $\mathrm{mc}_{k}\left(
G\right)  $, is the maximum number of edges in a $k$-partite subgraph of $G.$ In \cite{vanDam2016}, van
Dam and Sotirov showed that
\begin{equation}
\mathrm{mc}_{k}\left(  G\right)  \leq\frac{n\left(  k-1\right)  }{2k}%
\mu_1\left(  G\right)  , \label{DS}%
\end{equation}
where $\mu_{1}\left(  G\right)  $ is the maximum eigenvalue of the Laplacian
matrix of $G.$ 

Somewhat later, Nikiforov \cite{niki2016} showed that if $G$ is a graph with $n$ vertices and $m$ edges, then
\begin{equation}
\mathrm{mc}_{k}\left(  G\right)  \leq\frac{k-1}{k}\left(  m-\frac{\lambda_{n
}\left(  G\right)  n}{2}\right)  . \label{mcin}%
\end{equation}
Here $\lambda_{n}\left(  G\right)$ stands for the smallest eigenvalue of the adjacency matrix.

In the present paper we extend the concept of max $k$-cut to square matrices and derive a general upper bound, which implies the two bounds above, and in addition yields a novel one using the smallest eigenvalue of the signless Laplacian.
 
The paper is organized as follows. In Section \ref{sec:basics} we present some notation and preliminary results about graphs with  eigenvectors with entries only in $\{c_1, -c_2, 0\},$ where $c_1, c_2$ are positive real numbers. In Section 3, we study bounds on the $k$-cut of square matrices and graphs. Section 4 presents graphs with a specific type of eigenvectors, which is useful to build the infinite families of extremal graphs of Section 5.

\section{Preliminaries}\label{sec:basics}

Let us introduce some notation:  $e_{n}$ is the all ones $n-$vector; $A_{ij} = A(S_i,S_j)$ is the adjacency matrix of the induced subgraph $G[S_{i} \cup S_{j}]$; $E(X,Y)$ is the set of edges with endpoints in the vertices sets $X$ and $Y;$ $G[E(X,Y)]$ is the edge-induced subgraph by the edges in $E(X,Y);$ 
let $d_{v_i}(G)$ be the degree of vertex $v_{i} \in V;$   $d_{v_i}(X,\overline{X})$ is the number of edges connecting vertex $v_i \in X$ to vertices of $\overline{X};$ $d_{v_i}(X)$  is the degree of vertex $v_i$ in the induced subgraph $G[X].$

Suppose that $V$ is the vertex set of a graph
$G$ and let $V=S_{1}\cup S_{2}\cup S_{3}$ be a tripartition of $V.$ A vector
$p$ indexed by $V$ is said to be associated with the tripartition $\left\{
S_{1},S_{2},S_{3}\right\}  $ if the entries of $p$ are equal within each of
the sets $S_{1},S_{2},S_{3}.$
%
We define $\,p\,$ as an $n$-vector as follows:
$$p_{i} = \left\lbrace \begin{array}{rl}
c_1,&\text{ for }v_i \in S_1,\\
c_2,&\text{ for }v_i \in S_2,\\
c_3 ,&\text{ for }v_i \in S_3.
\end{array}\right. $$
When $p\in \{c_1, c_2\}^n$, we proceed analogously with a bipartition instead of a tripartition.
In particular, we are interested in the case where
$c_1 = 1, c_2 = -1,$ and $c_3 = 0.$
In this section, we use the proof technique of \cite{AL21} to characterize all graphs with eigenvectors with entries in $\{1,-1,0\}.$ Similar result is also obtained for the eigenvectors of the Laplacian matrix and the signless Laplacian matrix.

Next, we state necessary and sufficient conditions for a vector associated with a tripartition to be an eigenvector of the adjacency matrix of a graph.

\begin{theorem}\label{thm:adjpartition} Let $G=(V,E)$ be a graph on $n$ vertices and let $p\in \{1,-1,0\}^n$  be the vector associated to the partition $\{S_1, S_2, S_3\}$ of $V.$ Then, $p$ is an eigenvector of $A$ to the eigenvalue $\lambda$ if and only if
\begin{eqnarray}
\label{eq:adjpartition}
\begin{array}{cccl}
d_{v_i}(S_1) -  d_{v_i}(S_1,S_2)&=& \lambda,& \mbox{for  } \; v_i \in S_1,\\
d_{v_i}(S_2)- d_{v_i}(S_2,S_1)&=&\lambda,& \mbox{for 
 }\; v_i \in S_2,\\
d_{v_i}(S_3,S_1)- d_{v_i}(S_3,S_2)&=&0,& \mbox{for  }\; v_i \in S_3.\end{array}
\end{eqnarray}
\end{theorem}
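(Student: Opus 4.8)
The plan is to prove this by a direct, coordinate-wise evaluation of the vector $Ap$, exploiting the fact that the entries of $p$ take only the values $1$, $-1$, and $0$. Recall that $p$ is an eigenvector of $A$ for the eigenvalue $\lambda$ precisely when $(Ap)_i = \lambda p_i$ holds for every index $i$; hence it suffices to compute $(Ap)_i$ on each of the three parts $S_1, S_2, S_3$ and compare it with $\lambda p_i$ there.

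First I would unwind the definition of matrix--vector multiplication. For any vertex $v_i$,
\[
(Ap)_i = \sum_{j=1}^{n} a_{ij}\,p_j = \sum_{v_j \sim v_i} p_j,
\]
and since $p_j$ equals $1$, $-1$, or $0$ according to whether $v_j$ lies in $S_1$, $S_2$, or $S_3$, the sum collapses to the number of neighbors of $v_i$ in $S_1$ minus the number of neighbors of $v_i$ in $S_2$, with neighbors in $S_3$ contributing nothing. Writing $N_t(v_i)$ for the number of neighbors of $v_i$ in $S_t$, this is simply $(Ap)_i = N_1(v_i) - N_2(v_i)$.

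Next I would split into the three cases corresponding to the part containing $v_i$ and rewrite $N_1, N_2$ using the degree notation of this section. If $v_i \in S_1$, then $p_i = 1$, the neighbors of $v_i$ inside $S_1$ number $d_{v_i}(S_1)$ and those in $S_2$ number $d_{v_i}(S_1,S_2)$, so the eigen-equation $(Ap)_i = \lambda$ becomes the first line of \eqref{eq:adjpartition}. If $v_i \in S_2$, then $p_i = -1$, and the same count gives $(Ap)_i = d_{v_i}(S_2,S_1) - d_{v_i}(S_2)$; setting this equal to $\lambda p_i = -\lambda$ and negating yields the second line. Finally, if $v_i \in S_3$, then $p_i = 0$, so $(Ap)_i = d_{v_i}(S_3,S_1) - d_{v_i}(S_3,S_2)$ must equal $0$, which is the third line. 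Running the argument in reverse establishes the converse, since each displayed identity is exactly the assertion $(Ap)_i = \lambda p_i$ restricted to the corresponding part, and these parts exhaust $V$.

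The computation itself is routine; the only point demanding care is the sign bookkeeping on $S_2$, where $p_i = -1$ forces the sign flip that turns $(Ap)_i = \lambda p_i$ into the symmetric-looking second equation, together with the correct reading of the cross-degree symbols $d_{v_i}(S_a,S_b)$ as counting edges from $v_i$ to the opposite part. I would therefore state the neighbor-counting identity $(Ap)_i = N_1(v_i) - N_2(v_i)$ once, cleanly, and let the three cases and the converse follow uniformly from it rather than recomputing each line from scratch.
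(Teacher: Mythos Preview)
Your proof is correct and follows essentially the same approach as the paper: both compute $Ap$ directly and read off the three conditions from $(Ap)_i=\lambda p_i$ on each part, the only cosmetic difference being that the paper organizes the computation via the block decomposition $A=(A_{ij})$ and $p=(e_{n_1},-e_{n_2},0_{n_3})^T$ rather than your entrywise neighbor-counting identity $(Ap)_i=N_1(v_i)-N_2(v_i)$.
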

\begin{proof}
Let $A(G)=A$ be the adjacency matrix  of $G$ and let $p = (e_{n_1} \;  -e_{n_2} \; 0_{n_3})^{T}$ be the vector associated to the tripartition $\{S_1, S_2, S_3\}$ of $V$, where $n_j = |S_j|$, for $j=1,2,3.$ Thus, we have 
$$A p  = \left(
	 \begin{array}{ccc}
	 A_{11} &A_{12}&A_{13}\\
	 A_{21} &A_{22}&A_{23}\\
	 A_{31} &A_{32}&A_{33}\\
	 \end{array}\right)
	 \left(
	 \begin{array}{r}
	e_{n_1}\\
	 -e_{n_2}\\
	 0_{n_3}
	 \end{array}\right)	    	
	  \nonumber $$
	 $$ = \left(
	 \begin{array}{c}
	 A_{11}e_{n_1}-A_{12}e_{n_2}+A_{13}0_{n_3}\\
	 A_{21}e_{n_1}- A_{22}e_{n_2}+A_{23}0_{n_3}\\
	 A_{31}e_{n_1}- A_{32}e_{n_2}+A_{33}0_{n_3}\\	
	 \end{array}\right)	\nonumber $$
	 $$= \left(
	 \begin{array}{c}
	 d_{v_1}(S_1) - d_{v_1}(S_1,S_2) \\
	 \vdots \\
	 d_{v_{n_1}}(S_1) - d_{v_{n_1}}(S_1,S_2)\\
	 d_{v_{n_1+1}}(S_2,S_1)-  d_{v_{n_1+1}}(S_2)\\
	 \vdots \\
	 d_{v_{n_1+n_2}}(S_2,S_1)-  d_{v_{n_1+n_2}}(S_2)\\
	 d_{v_{n_1+n_2+1}}(S_3, S_1) -  d_{v_{n_1+n_2+1}}(S_3,S_2)\\
	 \vdots \\
	 d_{v_{n_1+n_2+n_3}}(S_3, S_1) -  d_{v_{n_1+n_2+n_3}}(S_3,S_2)\\
	 \end{array}\right).	\nonumber$$

Then, from the eigenequation $A p = \lambda p$ and using the previous equations, we get
\begin{eqnarray}
d_{v_i}(S_1)-d_{v_i}(S_1,S_2)&=& \lambda, \mbox{  for} \; v_i \in S_1,\\
d_{v_i}(S_2)-d_{v_i}(S_2,S_1)&=& \lambda, \mbox{  for}\; v_i \in S_2,\\
d_{v_i}(S_3,S_1)-d_{v_i}(S_3,S_2)&=&0, \mbox{  for}\; v_i \in S_3. \label{eq:adj3}
\end{eqnarray}
The proof is complete.
\end{proof}

\begin{remark}\label{rem1}
    It is easy to see that Theorem \ref{thm:adjpartition} holds for weighted graphs with possible loops. This fact will be used to prove Theorem \ref{thm:slpartition}.
\end{remark} 

Notice that if the induced subgraphs $G[S_1]$ and $G[S_2]$ are isomorphic $r$-regular graphs and $G[E(S_1, S_2)]$ is a $s$-regular bipartite graph, then the graph $G$ has at least one eigenvector with entries in $\{1, -1, 0\}$ associated with the eigenvalue $r-s$ for any $G[S_3]$ satisfying condition \eqref{eq:adj3}. For instance, Figure \ref{fig:family1} displays an example of a graph satisfying Theorem \ref{thm:adjpartition} with $\lambda_{10} = -3,$ where the bold dashed-line represents a join operation between vertices sets $S_1$ and $S_2.$ Also, once Equation \eqref{eq:adj3} is satisfied for any vertex $v \in S_3$, the induced subgraph $G[S_3]$ may be any graph.
\begin{figure}
\centering
\includegraphics[height=6cm]{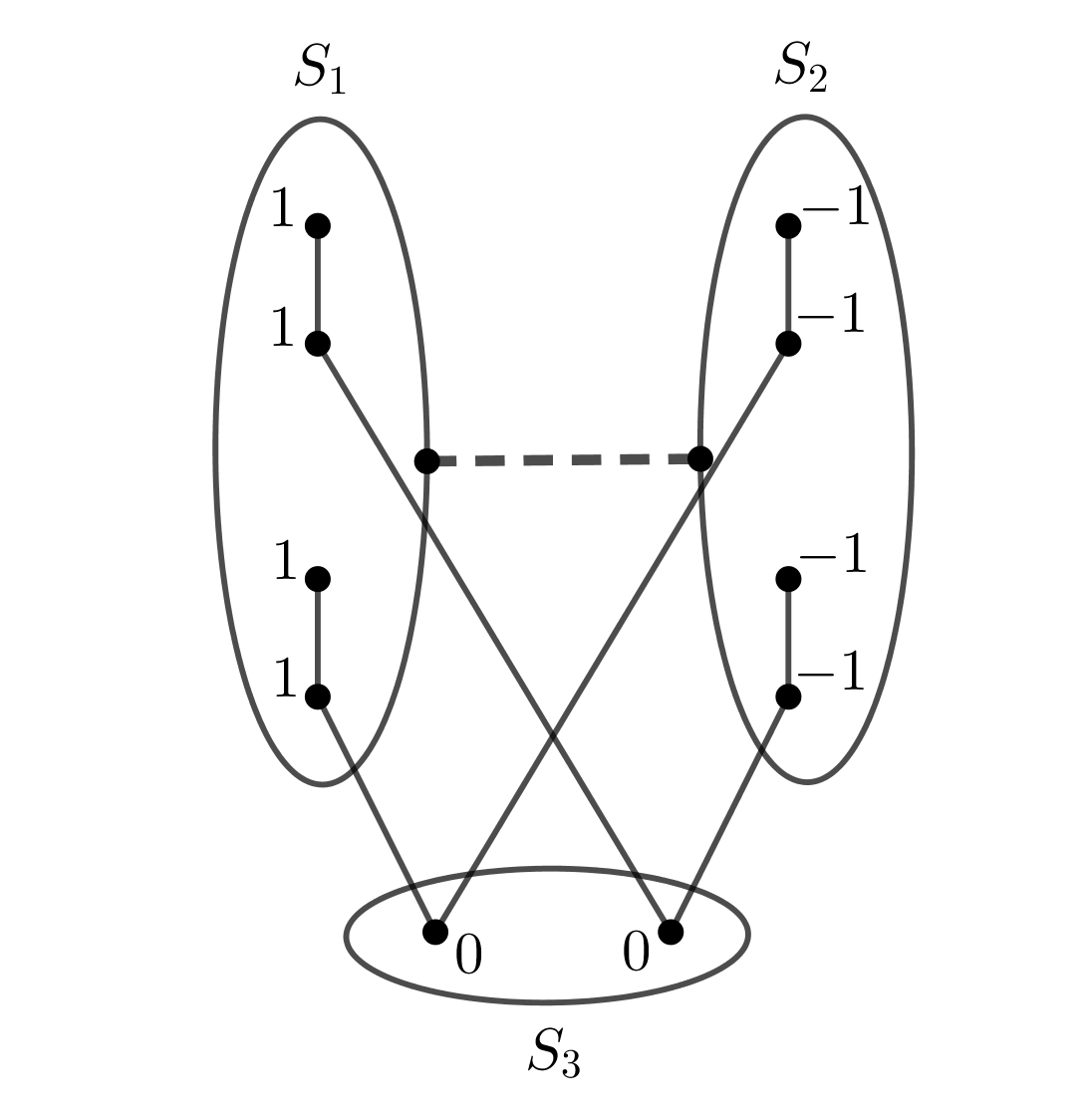}
\caption{Graph with $\lambda_{10} = -3$ and eigenvector with entries in $\{1,-1, 0\}$, where bold dashed line means that each vertex of $S_1$ is connected to all vertices in $S_2.$}
\label{fig:family1}
\end{figure}

\vspace{0.2cm}

Now, consider the $n$-vector given by
$$p^{\prime}_{i} =\left\lbrace \begin{array}{rl}
c_1,&\text{ for }v_i \in S_1,\\
-c_2,&\text{ for }v_i \in S_2,\\
0 ,&\text{ for }v_i \in S_3.
\end{array}\right. $$
The proof technique of Theorem \ref{thm:adjpartition} can be applied to characterize all
graphs with eigenvectors of the type $\{c_1, -c_2, 0\}^{n},$ where $c_1, c_2 \in \mathbb{R}_+^*.$

\begin{theorem}\label{Prop03-vec-pond-for-A} Let $G=(V,E)$ be a graph on $n$ vertices and let $p^{\prime} \in \{c_1, -c_2, 0\}^n$, for $c_1,c_2 \in \mathbb{R}_+^*$, be the vector associated to the partition $\{S_1, S_2, S_3\}$ of $V.$ Then, $p^{\prime}$ is an  eigenvector of $A$ to the eigenvalue $\lambda$ if and only if
\begin{eqnarray}
c_1 d_{v_i}(S_1)-c_2   d_{v_i}(S_1,S_2)&=&c_1   \lambda, \mbox{  for} \; v_i \in S_1, \label{Prop03-eqA}\\
c_2 d_{v_i}(S_2)-c_1   d_{v_i}(S_2,S_1)&=&c_2   \lambda, \mbox{  for}\; v_i \in S_2, \label{Prop03-eqB} \\
c_1  d_{v_i}(S_3,S_1)-c_2  d_{v_i}(S_3,S_2)&=&0, \mbox{  for}\; v_i \in S_3. \label{Prop03-eqC}
\end{eqnarray}
\end{theorem}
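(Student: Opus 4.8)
The plan is to repeat verbatim the block computation behind Theorem~\ref{thm:adjpartition}, now carrying the unequal weights $c_1$ and $-c_2$ through the matrix product. Order the vertices so that $V=S_1\cup S_2\cup S_3$ with $|S_j|=n_j$, write $A$ in the induced block form with blocks $A_{ij}=A(S_i,S_j)$, and set $p'=(c_1 e_{n_1}\;\; -c_2 e_{n_2}\;\; 0_{n_3})^{T}$. The whole statement is then just the componentwise reading of the single vector identity $Ap'=\lambda p'$, so both directions of the equivalence fall out at once.

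First I would expand the product $Ap'$ block by block, obtaining three stacked blocks $c_1A_{i1}e_{n_1}-c_2A_{i2}e_{n_2}$ for $i=1,2,3$ (the term $A_{i3}0_{n_3}$ vanishing). The key observation, exactly as in Theorem~\ref{thm:adjpartition}, is that for a vertex $v_i$ the $i$th entry of $A_{i1}e_{n_1}$ counts its neighbours inside $S_1$ and the $i$th entry of $A_{i2}e_{n_2}$ counts its neighbours inside $S_2$. Thus the $i$th entry of $Ap'$ equals $c_1 d_{v_i}(S_1)-c_2 d_{v_i}(S_1,S_2)$ when $v_i\in S_1$, equals $c_1 d_{v_i}(S_2,S_1)-c_2 d_{v_i}(S_2)$ when $v_i\in S_2$, and equals $c_1 d_{v_i}(S_3,S_1)-c_2 d_{v_i}(S_3,S_2)$ when $v_i\in S_3$.

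Next I would impose $Ap'=\lambda p'$ row by row, using that the target entry of $\lambda p'$ is $\lambda c_1$ on $S_1$, $-\lambda c_2$ on $S_2$, and $0$ on $S_3$. The $S_1$ and $S_3$ rows give \eqref{Prop03-eqA} and \eqref{Prop03-eqC} immediately. For $S_2$ the raw equation is $c_1 d_{v_i}(S_2,S_1)-c_2 d_{v_i}(S_2)=-c_2\lambda$, which I would simply transpose to the stated form \eqref{Prop03-eqB}; this sign bookkeeping in the $S_2$ block is the only place any care is required, and it is exactly what replaces the clean $\pm1$ cancellations of Theorem~\ref{thm:adjpartition}. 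No division by $c_1$ or $c_2$ is needed, so positivity of $c_1,c_2$ is not used in the computation itself — it only guarantees that the three values $c_1,-c_2,0$ are genuinely distinct, so that $\{S_1,S_2,S_3\}$ is a bona fide tripartition. Since each of the three vector identities is equivalent to its corresponding family of scalar equations, the converse direction holds by reading the same lines backwards, and there is no essential obstacle beyond the sign tracking already noted.
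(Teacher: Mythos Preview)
Your proposal is correct and follows exactly the approach the paper indicates: it applies the block computation of Theorem~\ref{thm:adjpartition} with the weights $c_1$ and $-c_2$ carried through, which is precisely what the paper means by ``the proof technique of Theorem~\ref{thm:adjpartition} can be applied.'' The paper gives no further details, so there is nothing to compare beyond this.
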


\begin{remark}\label{rem2} The Equations \eqref{Prop03-eqA}, \eqref{Prop03-eqB}, and \eqref{Prop03-eqC} can be interpreted in different ways, including for weighted digraphs. The graph of Figure \ref{fig:family2} satisfies the conditions of Theorem \ref{Prop03-vec-pond-for-A} for $c_1=1$ and $c_2 = -2$ since $\lambda_{3}=2.$
\end{remark}

\begin{figure}
\centering
\includegraphics[height = 8cm]{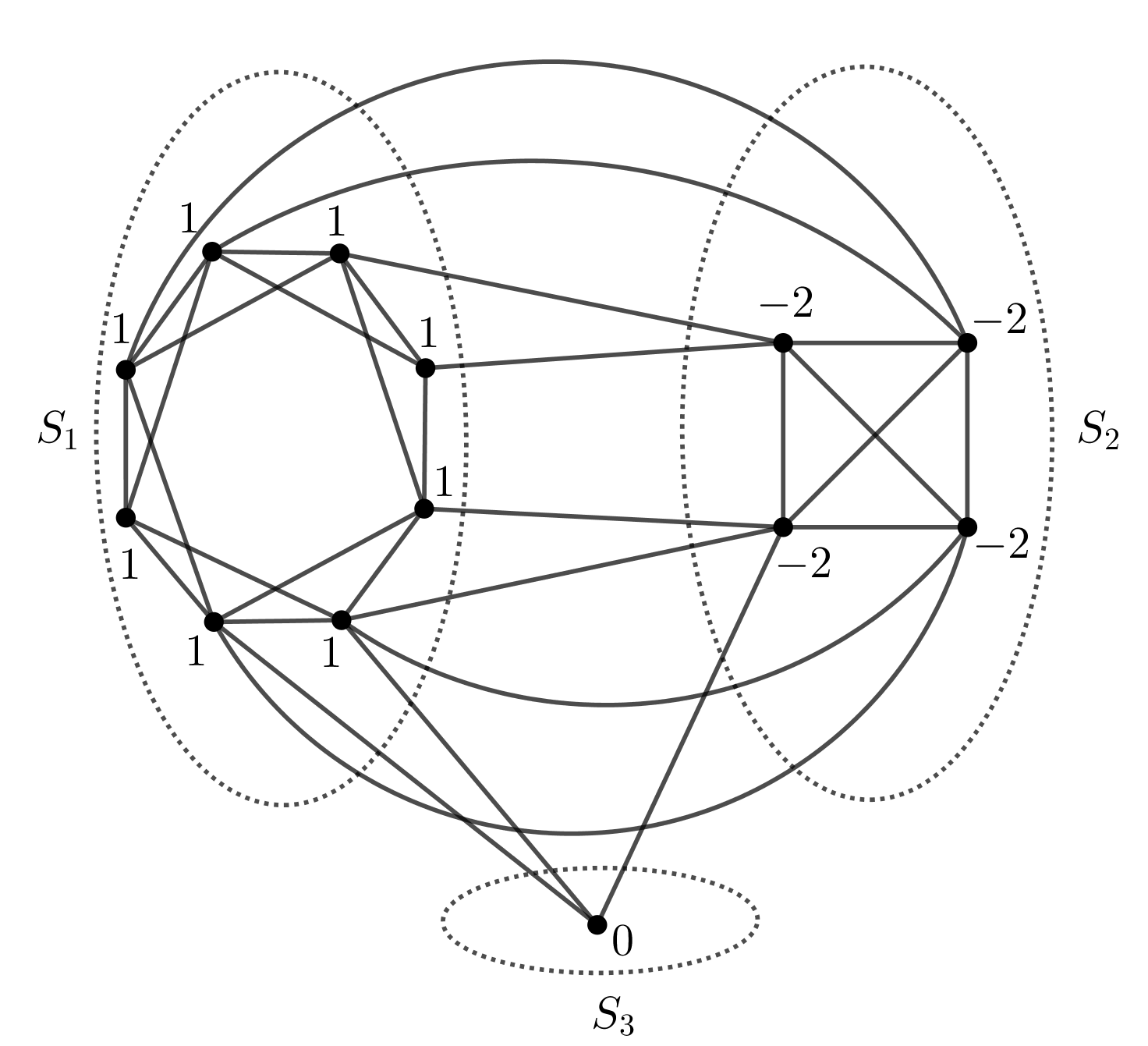}
\caption{Graph with $\lambda_{3} = 2$ and eigenvector with entries in $\{1,-2,0\}$.}
\label{fig:family2}
\end{figure}

\vspace{0.3cm}

The following theorem generalizes a result obtained by de Lima
and Alencar in \cite{AL21} (Theorem 4.1).

\vspace{0.3cm}

\begin{theorem}\label{Prop06-pond-part-vec-A} Let $G=(V,E)$ be a graph on $n$ vertices and let $p\in \{c_1,-c_2\}^n$, for $c_1,c_2 \in \mathbb{R}_+^*$, a vector associated with the partition  $\{S,\overline{S}\}$ of $V.$ Then
$p^{\prime}$ is an eigenvector to the eigenvalue $\lambda$ of $A$ if and only if
\begin{eqnarray}\label{Prop06-eqA}
\begin{array}{cccl}
c_1   d_{v_i}(S)-c_2    d_{v_i}(S,\overline{S})&=&c_1    \lambda,& \mbox{for } \; v_i \in S,\\
c_2   d_{v_i}(\overline{S})-c_1    d_{v_i}(\overline{S},S)&=&c_2    \lambda,& \mbox{for }\; v_i \in \overline{S}.\\\end{array}
\end{eqnarray}
\end{theorem}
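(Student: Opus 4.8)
The plan is to recognize Theorem \ref{Prop06-pond-part-vec-A} as the degenerate case of Theorem \ref{Prop03-vec-pond-for-A} in which the third cell of the tripartition is empty. Concretely, I would set $S_1 = S$, $S_2 = \overline{S}$, and $S_3 = \emptyset$ in Theorem \ref{Prop03-vec-pond-for-A}. With $S_3 = \emptyset$ the vector $p^{\prime}$ has no zero entries, so it lies in $\{c_1, -c_2\}^n$ rather than $\{c_1, -c_2, 0\}^n$, and condition \eqref{Prop03-eqC} becomes vacuous, there being no vertices $v_i \in S_3$ on which to impose it. The surviving conditions \eqref{Prop03-eqA} and \eqref{Prop03-eqB} coincide with \eqref{Prop06-eqA}, since $d_{v_i}(S_1) = d_{v_i}(S)$, $d_{v_i}(S_1,S_2) = d_{v_i}(S,\overline{S})$, and symmetrically for the $\overline{S}$ block. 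This yields the stated equivalence at once.

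For completeness I would also record the direct computation, mirroring the proof of Theorem \ref{thm:adjpartition}. Writing $n_1 = |S|$, $n_2 = |\overline{S}|$ and $p^{\prime} = (c_1 e_{n_1} \; -c_2 e_{n_2})^{T}$, I partition the adjacency matrix into the $2\times 2$ block form
$$A = \left( \begin{array}{cc} A_{11} & A_{12} \\ A_{21} & A_{22} \end{array} \right),$$
so that
$$A p^{\prime} = \left( \begin{array}{c} c_1 A_{11} e_{n_1} - c_2 A_{12} e_{n_2} \\ c_1 A_{21} e_{n_1} - c_2 A_{22} e_{n_2} \end{array} \right).$$
The $i$-th entry of the top block equals $c_1 d_{v_i}(S) - c_2 d_{v_i}(S,\overline{S})$ for $v_i \in S$, while the $i$-th entry of the bottom block equals $c_1 d_{v_i}(\overline{S},S) - c_2 d_{v_i}(\overline{S})$ for $v_i \in \overline{S}$. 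Imposing $A p^{\prime} = \lambda p^{\prime}$ and matching entries against $\lambda p^{\prime}_i = c_1 \lambda$ on $S$ and $\lambda p^{\prime}_i = -c_2 \lambda$ on $\overline{S}$ then produces \eqref{Prop06-eqA}.

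I expect no serious obstacle: the result is a straightforward specialization of the already-established tripartition statement, and the block computation is routine. The only point requiring minor care is the sign bookkeeping in the second block, where the eigenequation reads $c_1 d_{v_i}(\overline{S},S) - c_2 d_{v_i}(\overline{S}) = -c_2 \lambda$ and must be rearranged into the displayed form $c_2 d_{v_i}(\overline{S}) - c_1 d_{v_i}(\overline{S},S) = c_2 \lambda$.
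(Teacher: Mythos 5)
Your proof is correct and follows essentially the route the paper intends: the paper states this theorem without a separate proof, offering it as an application of the same technique used for Theorem \ref{thm:adjpartition} and Theorem \ref{Prop03-vec-pond-for-A}, which is exactly the block computation you record (including the sign bookkeeping $c_1 d_{v_i}(\overline{S},S)-c_2 d_{v_i}(\overline{S})=-c_2\lambda$ rearranged to the displayed form). Your first observation, that the statement is the $S_3=\emptyset$ specialization of Theorem \ref{Prop03-vec-pond-for-A}, is an equally valid shortcut in the same spirit and poses no difficulty, since the tripartition argument never requires the parts to be nonempty.
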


\vspace{0.4cm}

The next theorem characterizes the graphs where $p \in R^{n}$ is an eigenvector to the signless Laplacian $Q.$

\begin{theorem}\label{thm:slpartition} Let $G=(V,E)$ be a graph on $n$ vertices and let $p \in \{1,-1,0\}^n,$  be the vector associated to the partition $\{S_1, S_2, S_3\}$ of $V.$ Then, $p$ is an eigenvector to an eigenvalue $q$ of $Q$ if and only if
\begin{eqnarray}
2d_{v_i}(S_1) +  d_{v_i}(S_1,S_3)&=& q, \mbox{  for  } \; v_i \in S_1, \nonumber \\
2d_{v_i}(S_2) +  d_{v_i}(S_2,S_3)&=&q, \mbox{  for  }\; v_i \in S_2, \nonumber \\
d_{v_i}(S_3,S_1)- d_{v_i}(S_3,S_2)&=&0, \mbox{  for  }\; v_i \in S_3. \nonumber
\end{eqnarray}
\end{theorem}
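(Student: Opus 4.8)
The plan is to exploit the identity $Q = D + A$ together with the adjacency computation already carried out in the proof of Theorem~\ref{thm:adjpartition}. Writing $p = (e_{n_1}\; -e_{n_2}\; 0_{n_3})^{T}$ for the vector associated with the tripartition $\{S_1,S_2,S_3\}$, I would split $Qp = Dp + Ap$ and treat the two summands separately, block by block over $S_1$, $S_2$, $S_3$.

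For the $Ap$ part I reuse verbatim the three block expressions obtained inside the proof of Theorem~\ref{thm:adjpartition}: for $v_i \in S_1$ the entry is $d_{v_i}(S_1) - d_{v_i}(S_1,S_2)$, for $v_i \in S_2$ it is $d_{v_i}(S_2,S_1) - d_{v_i}(S_2)$, and for $v_i \in S_3$ it is $d_{v_i}(S_3,S_1) - d_{v_i}(S_3,S_2)$. For the $Dp$ part, since $D$ is diagonal with $D_{ii} = d_{v_i}$ and $p$ takes the values $1,-1,0$ on $S_1,S_2,S_3$, the corresponding blocks are simply $d_{v_i}$, $-d_{v_i}$, and $0$. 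The one genuinely substantive step is to expand the full degree according to the tripartition, $d_{v_i} = d_{v_i}(S_j) + d_{v_i}(S_j,S_{j'}) + d_{v_i}(S_j,S_{j''})$ for $v_i \in S_j$, and then add the two contributions. In the $S_1$ block the term $-d_{v_i}(S_1,S_2)$ coming from $Ap$ cancels the $+d_{v_i}(S_1,S_2)$ coming from $Dp$, leaving $2d_{v_i}(S_1) + d_{v_i}(S_1,S_3)$; the $S_2$ block behaves symmetrically, producing $-[2d_{v_i}(S_2) + d_{v_i}(S_2,S_3)]$.

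Matching these against $q\,p_i$ yields the three asserted equations: on $S_1$ we equate to $q$; on $S_2$ to $-q$ (the two sign changes cancelling to reproduce $2d_{v_i}(S_2)+d_{v_i}(S_2,S_3)=q$); and on $S_3$ to $0$, where $Dp$ contributes nothing because $p$ vanishes there, so the condition coincides exactly with the adjacency one. Every step is an equivalence, so running the computation backwards establishes the converse at no extra cost. Equivalently, one may invoke Remark~\ref{rem1} directly: regard $Q$ as the adjacency matrix of the weighted graph obtained from $G$ by attaching a loop of weight $d_{v_i}$ at each vertex $v_i$, and apply Theorem~\ref{thm:adjpartition} to it.

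I expect the only point requiring care to be the asymmetry in the cancellation pattern: the $S_1\!\leftrightarrow\! S_2$ cross-edges disappear (because the adjacency term subtracts precisely what the degree term adds back), whereas the edges to $S_3$ survive and the edges internal to $S_1$ (respectively $S_2$) are doubled. Keeping the directed notation $d_{v_i}(S_a,S_b)$ straight and decomposing $d_{v_i}$ correctly into its within-part and its two cross-parts is essentially the whole content of the argument; once that bookkeeping is pinned down, the three equations fall out immediately.
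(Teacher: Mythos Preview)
Your proposal is correct and essentially matches the paper's argument: the paper chooses precisely the route you mention at the end (view $Q$ as the adjacency matrix of the weighted graph with a degree-weight loop at each vertex and apply Theorem~\ref{thm:adjpartition} via Remark~\ref{rem1}), and then performs exactly the degree decomposition $d'_{v_i}(S_j)=2d_{v_i}(S_j)+d_{v_i}(S_j,S_k)+d_{v_i}(S_j,S_t)$ that your direct $Q=D+A$ computation makes explicit. Your primary presentation via $Dp+Ap$ is arguably a shade more transparent, but the substance---the cancellation of the $S_1\!\leftrightarrow\!S_2$ cross terms and the doubling of the internal degrees---is identical.
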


\begin{proof}
Let $G^{\prime}$ be the weighted graph obtained from $G$ by adding a loop $(v_i, v_i)$ to each vertex $v_i \in V(G)$ such that $w_{ii} = \sum_{v_j \sim v_i} w_{ij}.$ Write $d^{\prime}_{i}(X)$ and $d^{\prime}_{i}(X,Y)$ for the degrees of vertex $v_i$ in $G^{\prime}[X]$, and in $G^{\prime}[E(X,Y)]$, respectively. Note that $A^{\prime} = A(G^{\prime}) = Q(G).$ From Theorem \ref{thm:adjpartition} (with Remark \ref{rem1}), $p$ is an eigenvector to an eigenvalue $q$ of $Q$ if and only if
\begin{eqnarray}
d^{\prime}_{v_{i}}(S_1)- d^{\prime}_{v_{i}}(S_1,S_2)&=&  q, \mbox{  for } \; v_i \in S_1, \nonumber \\
d^{\prime}_{v_{i}}(S_2)- d^{\prime}_{v_{i}}(S_2,S_1)&=&  q, \mbox{  for }\; v_i \in S_2,\nonumber \\
d^{\prime}_{v_{i}}(S_3, S_1) - d^{\prime}_{v_{i}}(S_3,S_2)&=& 0 , \mbox{  for }\; v_i \in S_3. \nonumber
\end{eqnarray}
Since $d^{\prime}_{v_{i}}(S_j) = 2 d_{v_i}(S_j) + d_{v_i}(S_j, S_k) + d_{v_i}(S_j, S_t)$ and $d^{\prime}_{v_{i}}(S_j, S_k) = d_{v_i}(S_j, S_k)$ for every $v_{i} \in V(G^{\prime}), j \in \{1,2,3\}, k,t \in \{1,2,3\}\setminus \{j\}$ and $k \neq t$, we have

\begin{eqnarray}
2    d_{v_i}(S_1)+ d_{v_i}(S_1,S_3)&=&    q, \mbox{  for } \; v_i \in S_1, \nonumber \\
2    d_{v_i}(S_2)+ d_{v_i}(S_2,S_3)&=&    q, \mbox{  for } \; v_i \in S_2, \nonumber \\
  d_{v_i}(S_3,S_1)-    d_{v_i}(S_3,S_2)&=&0, \mbox{  for } \; v_i \in S_3. \nonumber
\end{eqnarray}

This completes the proof.
\end{proof}

\vspace{0.3cm}


A result similar to Theorem \ref{thm:slpartition} for the Laplacian can be
easily obtained from the results of \cite{caputo2019}. Using the ideas of Theorem \ref{Prop03-vec-pond-for-A} and \ref{Prop06-pond-part-vec-A}, we characterize all graphs with an eigenvector to $Q$ and $L$ of the type $p^{\prime} \in \{c_1,-c_2,0\}^n$, for $c_1,c_2 \in \mathbb{R}_+^*$.

\begin{corollary}\label{Prop04-vec-pond-for-QL} Let $G=(V,E)$ be a graph on $n$ vertices and let $p^{\prime} \in \{c_1,-c_2,0\}^n$, for $c_1, c_2 \in \mathbb{R}_{+}^{*}$, be a vector associated to the partition $\{S_1, S_2, S_3\}$ of $V.$ Then, $p^{\prime}$ is an eigenvector to an eigenvalue $q$ of $Q$ if and only if
\begin{eqnarray}
c_1    d_{v_i}(G)+c_1    d_{v_i}(S_1)-c_2   d_{v_i}(S_1, S_2) &=&c_1    q, \mbox{  for  } \; v_i \in S_1, \nonumber \\
c_2    d_{v_i}(G)+c_2    d_{v_i}(S_2)-c_1   d_{v_i}(S_2, S_1) &=&c_2    q, \mbox{  for  } \; v_i \in S_2, \nonumber \\
c_1    d_{v_i}(S_3, S_1)-c_2   d_{v_i}(S_3, S_2) &=& 0, \mbox{  for  } \; v_i \in S_3.\nonumber
\end{eqnarray}
\end{corollary}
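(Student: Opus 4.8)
The plan is to mimic the proof of Theorem \ref{thm:slpartition}, but starting from the weighted adjacency characterization of Theorem \ref{Prop03-vec-pond-for-A} rather than Theorem \ref{thm:adjpartition}, so that the constants $c_1,c_2$ are carried along throughout. First I would form the same auxiliary weighted graph $G^{\prime}$ used in the proof of Theorem \ref{thm:slpartition}, obtained from $G$ by attaching to each vertex $v_i$ a loop of weight $w_{ii}=\sum_{v_j\sim v_i} w_{ij}$, so that $A(G^{\prime})=Q(G)$. Since Theorem \ref{Prop03-vec-pond-for-A} is stated for the adjacency matrix but (as recorded in Remark \ref{rem1} and used already for Theorem \ref{thm:slpartition}) its proof technique applies verbatim to weighted graphs with loops, I may apply it to $G^{\prime}$ with the same vector $p^{\prime}$ and partition $\{S_1,S_2,S_3\}$.

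Second, applying Theorem \ref{Prop03-vec-pond-for-A} to $G^{\prime}$ yields, with $d^{\prime}$ denoting degrees in $G^{\prime}$,
\begin{eqnarray}
c_1 d^{\prime}_{v_i}(S_1)-c_2 d^{\prime}_{v_i}(S_1,S_2)&=&c_1 q, \quad v_i\in S_1,\nonumber\\
c_2 d^{\prime}_{v_i}(S_2)-c_1 d^{\prime}_{v_i}(S_2,S_1)&=&c_2 q, \quad v_i\in S_2,\nonumber\\
c_1 d^{\prime}_{v_i}(S_3,S_1)-c_2 d^{\prime}_{v_i}(S_3,S_2)&=&0, \quad v_i\in S_3.\nonumber
\end{eqnarray}
Third, I would substitute the two degree identities already established in the proof of Theorem \ref{thm:slpartition}, namely $d^{\prime}_{v_i}(S_j)=2d_{v_i}(S_j)+d_{v_i}(S_j,S_k)+d_{v_i}(S_j,S_t)$ and $d^{\prime}_{v_i}(S_j,S_k)=d_{v_i}(S_j,S_k)$, where $\{k,t\}=\{1,2,3\}\setminus\{j\}$. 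For the first equation, $v_i\in S_1$, this turns $c_1 d^{\prime}_{v_i}(S_1)$ into $c_1\bigl(2d_{v_i}(S_1)+d_{v_i}(S_1,S_2)+d_{v_i}(S_1,S_3)\bigr)$, and subtracting $c_2 d_{v_i}(S_1,S_2)$ gives $2c_1 d_{v_i}(S_1)+(c_1-c_2)d_{v_i}(S_1,S_2)+c_1 d_{v_i}(S_1,S_3)$. The key observation is that this is not literally the stated right-hand side unless the cross term is reorganized; the cleanest route is to keep the loop contribution packaged as $d_{v_i}(G)$. Indeed $d_{v_i}(G)=d_{v_i}(S_1)+d_{v_i}(S_1,S_2)+d_{v_i}(S_1,S_3)$ for $v_i\in S_1$, so $c_1 d^{\prime}_{v_i}(S_1)=c_1 d_{v_i}(G)+c_1 d_{v_i}(S_1)$, and the first equation collapses exactly to $c_1 d_{v_i}(G)+c_1 d_{v_i}(S_1)-c_2 d_{v_i}(S_1,S_2)=c_1 q$, matching the statement. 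The second and third equations follow symmetrically, with the third requiring no loop term since $v_i\in S_3$ contributes only cross-degrees, giving $c_1 d_{v_i}(S_3,S_1)-c_2 d_{v_i}(S_3,S_2)=0$.

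I expect the only real subtlety to be bookkeeping in the substitution step: one must verify that the loop weight $w_{ii}$ in $G^{\prime}$ reproduces exactly the diagonal entry $d_{v_i}(G)$ of $Q(G)$, and that the off-diagonal (cross) weights are preserved, i.e.\ $d^{\prime}_{v_i}(S_j,S_k)=d_{v_i}(S_j,S_k)$. Since this identity was already invoked for Theorem \ref{thm:slpartition}, I may cite it rather than re-derive it. The analogous statement for the Laplacian $L$ is obtained identically, replacing $G^{\prime}$ by the weighted graph with loop weights $w_{ii}=-\sum_{v_j\sim v_i}w_{ij}$ (equivalently using $A(G^{\prime})=-L(G)$ up to sign) so that the diagonal contributes $+c_1 d_{v_i}(G)$ with the opposite-signed adjacency, yielding the corresponding Laplacian equations; since the corollary as stated only asserts the $Q$ case explicitly, I would either restrict to $Q$ or add a one-line remark that the $L$ case is entirely parallel.
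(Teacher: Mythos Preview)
Your proposal is correct and matches the paper's own (commented-out) proof essentially line for line: form the auxiliary weighted graph $G^{\prime}$ with loop weights so that $A(G^{\prime})=Q(G)$, apply Theorem~\ref{Prop03-vec-pond-for-A} in its weighted-with-loops form, and substitute the same degree identities $d^{\prime}_{v_i}(S_j)=2d_{v_i}(S_j)+d_{v_i}(S_j,S_k)+d_{v_i}(S_j,S_t)$ and $d^{\prime}_{v_i}(S_j,S_k)=d_{v_i}(S_j,S_k)$ used in Theorem~\ref{thm:slpartition}. Your observation that the cleanest packaging is via $d^{\prime}_{v_i}(S_1)=d_{v_i}(G)+d_{v_i}(S_1)$ is exactly the final simplification the paper performs.
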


\begin{corollary} Let $G=(V,E)$ be a graph on $n$ vertices and let $p\in \{c_1,-c_2\}^n$, for $c_1,c_2 \in \mathbb{R}_+^*$, a vector associated to the bipartition $\{S,\overline{S}\}$ of $V.$ Then, $p$ is an eigenvector to an
eigenvalue $q$ of $Q$ if and only if
\begin{eqnarray*}
 \begin{array}{cccl}
c_1   d_{v_i}(G)+ c_1 d_{v_i}(S)-c_2    d_{v_i}(S,\overline{S})&=&c_1    q,& \mbox{for } \; v_i \in S, \\
c_2   d_{v_i}(G)+ c_2   d_{v_i}(\overline{S})-c_1    d_{v_i}(\overline{S},S)&=&c_2    q,& \mbox{for }\; v_i \in \overline{S}.\\\end{array}
\end{eqnarray*}

\end{corollary}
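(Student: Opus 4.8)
The plan is to specialize the tripartition characterization (Corollary \ref{Prop04-vec-pond-for-QL}) to the degenerate case where $S_3 = \emptyset$, so that the tripartition $\{S_1, S_2, S_3\}$ collapses to the bipartition $\{S, \overline{S}\}$ with $S = S_1$ and $\overline{S} = S_2$. First I would set $S_3 = \emptyset$ in the three equations of Corollary \ref{Prop04-vec-pond-for-QL}. The third equation, indexed over $v_i \in S_3$, becomes vacuous and is simply discarded. For the first two equations, the relabeling $S_1 \mapsto S$ and $S_2 \mapsto \overline{S}$ turns $d_{v_i}(S_1)$ into $d_{v_i}(S)$, turns $d_{v_i}(S_1, S_2)$ into $d_{v_i}(S, \overline{S})$, and symmetrically for the $\overline{S}$ row; the term $d_{v_i}(G)$ is unaffected. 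This reproduces exactly the two displayed equations of the corollary, so the result follows immediately.

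Alternatively, and perhaps more cleanly, I would mirror the proof of Theorem \ref{thm:slpartition} by passing to the loop-augmented weighted graph $G'$ in which each vertex $v_i$ receives a loop of weight $w_{ii} = \sum_{v_j \sim v_i} w_{ij}$, so that $A(G') = Q(G)$. Invoking Theorem \ref{Prop06-pond-part-vec-A} (the bipartition characterization for the adjacency matrix, applicable to weighted graphs with loops by the same reasoning as Remark \ref{rem1}) for the vector $p \in \{c_1, -c_2\}^n$ on $G'$ gives
\begin{eqnarray*}
c_1\, d'_{v_i}(S) - c_2\, d'_{v_i}(S, \overline{S}) &=& c_1\, q, \quad v_i \in S, \\
c_2\, d'_{v_i}(\overline{S}) - c_1\, d'_{v_i}(\overline{S}, S) &=& c_2\, q, \quad v_i \in \overline{S}.
\end{eqnarray*}
I would then substitute the loop-accounting identities $d'_{v_i}(S) = 2 d_{v_i}(S) + d_{v_i}(S, \overline{S})$ and $d'_{v_i}(S, \overline{S}) = d_{v_i}(S, \overline{S})$ (the two-set analogues of the relations used in Theorem \ref{thm:slpartition}, now with no third block), and finally fold the identity $d_{v_i}(G) = d_{v_i}(S) + d_{v_i}(S, \overline{S})$ back in to convert the coefficient $2 d_{v_i}(S) + d_{v_i}(S, \overline{S})$ into the desired $d_{v_i}(G) + d_{v_i}(S)$.

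There is essentially no serious obstacle here: the statement is a direct corollary of material already established. The only point demanding a little care is the bookkeeping of how the degree $d'_{v_i}$ in the looped graph decomposes, and in particular keeping the coefficients $c_1, c_2$ attached to the correct blocks when rewriting $2 d_{v_i}(S)$ as $d_{v_i}(G) + d_{v_i}(S) - d_{v_i}(S, \overline{S})$; since this is routine algebra identical to the $S_3 = \emptyset$ reduction above, I expect the proof to be a one-paragraph verification rather than anything requiring a new idea.
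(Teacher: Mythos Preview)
Your proposal is correct and matches the paper's approach: the paper states this corollary without proof, immediately after Corollary~\ref{Prop04-vec-pond-for-QL}, as a direct consequence of the preceding material, and both of your routes (specializing the tripartition result to $S_3=\emptyset$, or repeating the loop-augmented graph argument with Theorem~\ref{Prop06-pond-part-vec-A}) are exactly the intended derivations.
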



\begin{corollary} Let $G=(V,E)$ be a graph on $n$ vertices and let $p^{\prime} \in \{c_1,-c_2,0\}^n$, for $c_1, c_2 \in \mathbb{R}_{+}^{*}$, be a vector associated to the partition $\{S_1, S_2, S_3\}$ of $V.$ Then, $p^{\prime}$ is an eigenvector to an eigenvalue $\mu$ of $L$ if and only if
\begin{eqnarray}
c_1    d_{v_i}(G)-c_1    d_{v_i}(S_1)+c_2   d_{v_i}(S_1, S_2) &=&c_1    \mu, \mbox{  for} \; v_i \in S_1, \nonumber \\
c_2    d_{v_i}(G)-c_2    d_{v_i}(S_2)+c_1   d_{v_i}(S_2, S_1) &=&c_2    \mu, \mbox{  for} \; v_i \in S_2, \nonumber \\
c_1    d_{v_i}(S_3, S_1)-c_2   d_{v_i}(S_3, S_2) &=& 0, \mbox{  for} \; v_i \in S_3.\nonumber
\end{eqnarray}
\end{corollary}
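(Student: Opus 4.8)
The plan is to mirror the loop-reduction argument already used to prove Theorem \ref{thm:slpartition} and Corollary \ref{Prop04-vec-pond-for-QL}, except that the auxiliary weighted graph is now chosen so that its adjacency matrix equals the Laplacian $L(G)=D(G)-A(G)$ rather than the signless Laplacian. The key observation is that $L$ differs from $Q$ only in the sign of its off-diagonal entries, so the identical construction works once negative edge weights are permitted. First I would construct the weighted graph $G''$ on vertex set $V$ whose edges carry weight $w''_{ij}=-w_{ij}$ and which has a loop of weight $w''_{ii}=d_{v_i}(G)$ at each vertex $v_i$. By inspection of entries, $A(G'')=L(G)$: the diagonal entry is the loop weight $d_{v_i}(G)$, matching $D$, while the off-diagonal entry is $-w_{ij}$, matching $-A$. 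Because Theorem \ref{Prop03-vec-pond-for-A} remains valid for weighted graphs with loops and signed weights (Remarks \ref{rem1} and \ref{rem2}), I may apply it to $G''$: the vector $p'$ is an eigenvector of $A(G'')=L$ to the eigenvalue $\mu$ if and only if the conditions \eqref{Prop03-eqA}--\eqref{Prop03-eqC} hold with every degree replaced by its counterpart $d''$ in $G''$.

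Next I would translate the $G''$-degrees back into degrees of the original graph $G$. Reading $d''_{v_i}(X)$ as the row sum of $A(G'')$ over the columns indexed by $X$ (so that the loop, i.e.\ the diagonal entry, is counted exactly once, just as in the signless case), one obtains $d''_{v_i}(S_j)=d_{v_i}(G)-d_{v_i}(S_j)$ for $v_i\in S_j$ and $d''_{v_i}(S_j,S_k)=-\,d_{v_i}(S_j,S_k)$. Substituting these two identities into the transported conditions \eqref{Prop03-eqA}--\eqref{Prop03-eqC} and collecting terms yields precisely the three displayed equations of the corollary: the loop term supplies the $c_1 d_{v_i}(G)$ summand and flips $+c_1 d_{v_i}(S_1)$ to $-c_1 d_{v_i}(S_1)$, while the sign reversal on the cross-degree converts the $-c_2 d''$ term into $+c_2 d_{v_i}(S_1,S_2)$, exactly matching Corollary \ref{Prop04-vec-pond-for-QL} with $D+A$ replaced by $D-A$ throughout.

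The only delicate point is the bookkeeping with the loop and the negative edge weights. I must ensure that the loop weight enters $d''_{v_i}(S_j)$ once rather than twice (the row-sum convention forced by the eigenequation $A(G'')p'=\mu p'$), and that the sign reversal on the edges is propagated consistently through both the diagonal and the off-diagonal contributions. Once these two conventions are fixed, the remainder is the same routine substitution that establishes Corollary \ref{Prop04-vec-pond-for-QL}, so I would not expect any further obstacle.
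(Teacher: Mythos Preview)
Your approach is correct and is precisely the route the paper intends: the corollary is stated without proof, but the surrounding text and the analogous argument for Corollary~\ref{Teo-kcut-forL} make clear that the intended proof is exactly your signed-loop construction, where one sets edge weights to $-1$ and adds a loop of weight $d_{v_i}(G)$ so that the adjacency matrix of the auxiliary weighted graph equals $L(G)$, and then invokes Theorem~\ref{Prop03-vec-pond-for-A} via Remarks~\ref{rem1} and~\ref{rem2}. Your bookkeeping of the loop (counted once as the diagonal entry, matching the row-sum convention used in the proof of Theorem~\ref{thm:slpartition}) and of the sign flip on cross-degrees is handled correctly and yields the three displayed equations verbatim.
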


\begin{corollary} Let $G=(V,E)$ be a graph on $n$ vertices and let $p\in \{c_1,-c_2\}^n$, for $c_1,c_2 \in \mathbb{R}_+^*$, a  vector associated to the bipartition $\{S,\overline{S}\}$ of $V.$ Then, $p$ is an eigenvector to an eigenvalue $\mu$ of $L$ if and only if
\begin{eqnarray*}
\begin{array}{cccl}
(c_1+c_2)    d_{v_i}(S,\overline{S})&=&c_1    \mu,& \mbox{for } \; v_i \in S,\\
(c_1+c_2)    d_{v_i}(\overline{S},S)&=&c_2    \mu,& \mbox{for }\; v_i \in \overline{S}.\\\end{array}
\end{eqnarray*}

\end{corollary}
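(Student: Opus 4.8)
The plan is to obtain this corollary as the empty-$S_3$ specialization of the preceding corollary characterizing Laplacian eigenvectors of type $\{c_1,-c_2,0\}^n$, since a bipartition $\{S,\overline{S}\}$ is exactly a tripartition with $S_3=\emptyset$. First I would set $S_1=S$, $S_2=\overline{S}$, and $S_3=\emptyset$. With $S_3$ empty the third equation of that corollary becomes vacuous (there are no vertices indexing it), so only the first two equations survive, and moreover $d_{v_i}(S_1,S_2)=d_{v_i}(S,\overline{S})$ for $v_i\in S$ and $d_{v_i}(S_2,S_1)=d_{v_i}(\overline{S},S)$ for $v_i\in\overline{S}$.

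The key step is then the degree decomposition: because there is no third part, every edge at $v_i\in S$ goes either inside $S$ or across to $\overline{S}$, whence $d_{v_i}(G)=d_{v_i}(S)+d_{v_i}(S,\overline{S})$. Substituting this into the first equation $c_1 d_{v_i}(G)-c_1 d_{v_i}(S)+c_2 d_{v_i}(S,\overline{S})=c_1\mu$ makes the two $d_{v_i}(S)$ terms cancel, leaving $c_1 d_{v_i}(S,\overline{S})+c_2 d_{v_i}(S,\overline{S})=(c_1+c_2)d_{v_i}(S,\overline{S})=c_1\mu$. The symmetric computation for $v_i\in\overline{S}$, using $d_{v_i}(G)=d_{v_i}(\overline{S})+d_{v_i}(\overline{S},S)$, yields $(c_1+c_2)d_{v_i}(\overline{S},S)=c_2\mu$, which is precisely the claimed system.

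As an alternative self-contained confirmation, one can verify the statement directly from $L=D-A$ applied to the partition vector $p$ (with $p_i=c_1$ on $S$ and $p_i=-c_2$ on $\overline{S}$): for $v_i\in S$ the eigenequation reads $c_1 d_{v_i}(G)-\bigl(c_1 d_{v_i}(S)-c_2 d_{v_i}(S,\overline{S})\bigr)=\mu c_1$, which simplifies via the same decomposition to $(c_1+c_2)d_{v_i}(S,\overline{S})=c_1\mu$, and dually on $\overline{S}$. I do not expect any genuine obstacle here; the proof is routine bookkeeping, and the only point requiring a word of care is that the identity $d_{v_i}(G)=d_{v_i}(S)+d_{v_i}(S,\overline{S})$ holds precisely because $S$ and $\overline{S}$ exhaust $V$, i.e. the term $d_{v_i}(S_1,S_3)$ present in the general $Q$-case drops out in the Laplacian bipartition setting.
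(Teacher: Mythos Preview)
Your argument is correct and matches the paper's intended approach: the corollary is stated without proof precisely because it is the $S_3=\emptyset$ specialization of the preceding Laplacian corollary, with the simplification coming from $d_{v_i}(G)=d_{v_i}(S)+d_{v_i}(S,\overline{S})$ exactly as you carry it out. The alternative direct verification via $Lp=\mu p$ is also fine and amounts to the same bookkeeping.
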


\textcolor{blue}{For each $i\,=\,1,\ldots, n,$ write $\lambda_{i}(M)$ to the $i$-th largest eigenvalue of a square matrix $M$ of order $n.$} We state the well-known Weyl's inequality used in Section 5. The equality case in Weyl's inequalities has been proved
by Wasin So in \cite{So94}.

\begin{theorem}[\cite{So94}]\label{weyltheo}
Let $A$ and $B$ be Hermitian matrices of order $n$, and let $1\leq i\leq n$ and $1 \leq j\leq n$. Then
\begin{eqnarray}
\lambda _i(A) + \lambda _j(B) \leq \lambda _{i+j-n}(A+B),\text{ if } i+j \geq n + 1, \label{in:weyl1} \\
\lambda _i(A) + \lambda _j(B) \geq \lambda _{i+j-1}(A+B),\text{ if } i+j \leq n + 1. \label{in:weyl2}
\end{eqnarray}
In either of these inequalities, equality holds if and only if there exists a nonzero $n$-vector that is an eigenvector to each of the three involved eigenvalues.
\end{theorem}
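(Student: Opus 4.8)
\emph{Proof proposal.} The plan is to derive both inequalities from the Courant--Fischer min--max characterization of the eigenvalues of a Hermitian matrix and then to extract the equality condition from a single three-fold subspace intersection that controls all three matrices at once. Write $R_M(x)=\frac{x^{*}Mx}{x^{*}x}$ for the Rayleigh quotient and recall that, with $\lambda_1(M)\ge\cdots\ge\lambda_n(M)$, one has $\lambda_\ell(M)=\max_{\dim U=\ell}\min_{0\ne x\in U}R_M(x)$; in particular $R_M(x)\ge\lambda_\ell(M)$ for every $x$ in the span of the top $\ell$ eigenvectors, while $R_M(x)\le\lambda_\ell(M)$ for every $x$ in the span of the bottom $n-\ell+1$ eigenvectors. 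For \eqref{in:weyl1}, set $k=i+j-n$ and let $U_A$, $V_B$ be the spans of the top $i$ eigenvectors of $A$ and the top $j$ eigenvectors of $B$. Since $\dim U_A+\dim V_B=i+j=k+n$, we get $\dim(U_A\cap V_B)\ge k$, and for every $x\in U_A\cap V_B$ the additivity $R_{A+B}(x)=R_A(x)+R_B(x)\ge\lambda_i(A)+\lambda_j(B)$ holds; applying the max--min formula for $\lambda_k(A+B)$ to a $k$-dimensional subspace of $U_A\cap V_B$ yields \eqref{in:weyl1}. Inequality \eqref{in:weyl2} then follows by applying \eqref{in:weyl1} to $-A$ and $-B$ and using $\lambda_\ell(-M)=-\lambda_{n+1-\ell}(M)$.

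For the equality case of \eqref{in:weyl1}, the key move is to bring in a third subspace: let $W$ be the span of the bottom $n-k+1$ eigenvectors of $A+B$. A dimension count gives $\dim(U_A\cap V_B\cap W)\ge k+(n-k+1)-n=1$, so there is a nonzero $x$ lying in all three. For this $x$ we have the sandwich
\[
\lambda_i(A)+\lambda_j(B)\;\le\;R_A(x)+R_B(x)\;=\;R_{A+B}(x)\;\le\;\lambda_k(A+B).
\]
Under the equality hypothesis both outer inequalities collapse, forcing $R_A(x)=\lambda_i(A)$, $R_B(x)=\lambda_j(B)$, and $R_{A+B}(x)=\lambda_k(A+B)$. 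The crucial step --- and the part I expect to be the main obstacle --- is to upgrade these extremal Rayleigh quotients to genuine eigenvector relations. Expanding $x$ in the relevant orthonormal eigenbasis expresses each Rayleigh quotient as a convex combination of eigenvalues all lying on one side of the extremal value, so attaining the extremum forces the coefficients off the extremal eigenspace to vanish; hence $Ax=\lambda_i(A)x$, $Bx=\lambda_j(B)x$, and $(A+B)x=\lambda_k(A+B)x$, which exhibits the required common eigenvector.

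The converse is immediate: if such a common eigenvector $x$ exists, then $(A+B)x=Ax+Bx=\bigl(\lambda_i(A)+\lambda_j(B)\bigr)x$, and since $x\ne 0$ this gives $\lambda_k(A+B)=\lambda_i(A)+\lambda_j(B)$. Finally, the entire equality analysis for \eqref{in:weyl2} transfers through the substitution $A\mapsto-A$, $B\mapsto-B$, because a common eigenvector for $-A$, $-B$, $-(A+B)$ (at the correspondingly indexed eigenvalues) is literally the same vector as one for $A$, $B$, $A+B$.
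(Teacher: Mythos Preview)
The paper does not give its own proof of this theorem; it merely states it and cites \cite{So94}. Your proposal is a correct, self-contained argument and is in fact essentially So's original proof: the inequalities come from Courant--Fischer via a two-subspace intersection, and the equality characterization comes from introducing a third subspace (the bottom $n-k+1$ eigenvectors of $A+B$) so that the triple intersection is nonempty and the Rayleigh-quotient sandwich collapses. The step you flagged as the main obstacle---upgrading an extremal Rayleigh quotient to a genuine eigenvector relation---is handled correctly: since $x$ already lies in the span of the top $i$ eigenvectors of $A$, the expansion of $R_A(x)$ is a convex combination of eigenvalues all $\ge\lambda_i(A)$, so equality forces the support of $x$ into the $\lambda_i(A)$-eigenspace, and similarly for $B$ and $A+B$.
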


\section{$k$-cut of square matrices}

Write $\Sigma A$ for the sum of the entries of a matrix $A.$ We next introduce $k$-cut of square matrices. Let $A$ be an $n\times n$ matrix, and let $\mathcal{V}=\left\{  V_{1}%
,\ldots,V_{k}\right\}  $ be a partition of the set $\left[  n\right]
=\left\{  1,\ldots,n\right\}.$ 
Write $A_{p,q}$ for the submatrix of $A$ consisting of all entries $a_{i,j}$
such that $i\in V_{p}$ and $j\in V_{q}$. Now let%
\[
\textcolor{blue}{2} \; \mathrm{cut}\left(  A,\mathcal{V}\right)  =\Sigma A-\Sigma A_{1,1}-\cdots-\Sigma
A_{k,k}.
\]
It is not hard to see that if $A$ is the adjacency matrix of a graph $G$, then $\mathrm{cut}\left(  A,\mathcal{V}\right)$ is the size of the $k$-partite subgraph of $G$, whose partition is given by $\mathcal{V}$. 

\begin{theorem}
\label{mth}Let $A$ be a symmetric matrix of order $n$, and let $\mathcal{V}%
=\left\{  V_{1},\ldots,V_{k}\right\}  $ be a partition of $\left[  n\right]
.$ If $B$ is a\ diagonal matrix of order $n,$ then
\begin{equation}
\lambda_{1} \left(  B-A\right)  \geq\frac{\textcolor{blue}{2}k}{\left(  k-1\right)  n}\mathrm{cut}%
\left(  A,\mathcal{V}\right)  -\frac{1}{n}\Sigma A+\frac{1}{n}\Sigma
B.\label{mcin}%
\end{equation}

\end{theorem}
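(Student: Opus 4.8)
The plan is to bound $\lambda_{1}(B-A)$ from below by the Rayleigh quotients of a family of carefully chosen complex test vectors and then average. Since $B$ is diagonal and $A$ is symmetric, $B-A$ is real symmetric (hence Hermitian), so its largest eigenvalue satisfies $\lambda_{1}(B-A)\ge \frac{x^{*}(B-A)x}{x^{*}x}$ for every nonzero $x\in\mathbb{C}^{n}$. Let $\omega=\exp(2\pi\sqrt{-1}/k)$ be a primitive $k$-th root of unity, and for each $\ell\in\{1,\ldots,k-1\}$ define $x^{(\ell)}\in\mathbb{C}^{n}$ by setting $x^{(\ell)}_{i}=\omega^{\ell p}$ whenever $i\in V_{p}$. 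Because every entry has modulus one, $(x^{(\ell)})^{*}x^{(\ell)}=n$, so each vector yields the inequality $n\,\lambda_{1}(B-A)\ge (x^{(\ell)})^{*}(B-A)x^{(\ell)}$.

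First I would evaluate the two quadratic forms. Since $B$ is diagonal and the entries of $x^{(\ell)}$ have modulus one, $(x^{(\ell)})^{*}Bx^{(\ell)}=\sum_{i}b_{ii}=\Sigma B$ for every $\ell$. For the $A$-term, grouping indices by the blocks of $\mathcal{V}$ and using $\overline{\omega^{\ell p}}\,\omega^{\ell q}=\omega^{\ell(q-p)}$ gives
\[
(x^{(\ell)})^{*}Ax^{(\ell)}=\sum_{p,q}\omega^{\ell(q-p)}\,\Sigma A_{p,q}.
\]

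The key step is to sum these identities over $\ell=1,\ldots,k-1$ and exploit the root-of-unity identity $\sum_{\ell=0}^{k-1}\omega^{\ell m}=k$ if $k\mid m$ and $0$ otherwise. Since $p,q\in\{1,\ldots,k\}$, the residue $q-p$ is divisible by $k$ exactly when $p=q$, so $\sum_{\ell=1}^{k-1}\omega^{\ell(q-p)}$ equals $k-1$ when $p=q$ and $-1$ when $p\neq q$. Hence
\[
\sum_{\ell=1}^{k-1}(x^{(\ell)})^{*}Ax^{(\ell)}=(k-1)\sum_{p}\Sigma A_{p,p}-\sum_{p\neq q}\Sigma A_{p,q}=k\sum_{p}\Sigma A_{p,p}-\Sigma A,
\]
using $\sum_{p,q}\Sigma A_{p,q}=\Sigma A$. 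Substituting $\sum_{p}\Sigma A_{p,p}=\Sigma A-2\,\mathrm{cut}(A,\mathcal{V})$ from the definition of the cut turns the right-hand side into $(k-1)\Sigma A-2k\,\mathrm{cut}(A,\mathcal{V})$.

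Finally I would assemble the pieces: summing the $(k-1)$ Rayleigh inequalities gives
\[
(k-1)n\,\lambda_{1}(B-A)\ge (k-1)\Sigma B-\bigl[(k-1)\Sigma A-2k\,\mathrm{cut}(A,\mathcal{V})\bigr],
\]
and dividing by $(k-1)n$ yields exactly the claimed bound. The only real subtlety, and the step I expect to require the most care, is the bookkeeping in the root-of-unity cancellation that converts the off-diagonal block sums into the cut term; everything else is a direct computation. Working with complex test vectors, rather than a real simplex construction, is precisely what makes this cancellation transparent.
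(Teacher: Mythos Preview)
Your proof is correct. Both arguments bound $\lambda_{1}(B-A)$ from below by averaging Rayleigh quotients over a family of block-constant test vectors, but you and the paper choose different bases for that family. The paper works with the $k$ real ``simplex'' vectors $\mathbf{s}^{(i)}$ (entry $-(k-1)$ on $V_{i}$, entry $1$ elsewhere), computes $\sum_i\|\mathbf{s}^{(i)}\|^{2}=k(k-1)n$, and expands $\sum_i\langle C\mathbf{s}^{(i)},\mathbf{s}^{(i)}\rangle$ into four block sums that are then recombined by hand. Your $k-1$ complex vectors with entries $\omega^{\ell p}$ span the same $(k-1)$-dimensional space of block-constant vectors orthogonal to the all-ones vector, but the root-of-unity identity collapses the off-diagonal bookkeeping into a single line, which is indeed cleaner. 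The trade-off is that the paper's real vectors $\mathbf{s}^{(i)}$ are reused throughout Sections~4 and~5 to characterize the equality cases and build extremal families, something your complex vectors would need to be translated back to do; so the paper's choice, while computationally heavier here, pays off downstream.
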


\begin{proof}
We use Rayleigh's principle to construct $k$ lower bounds on $\lambda_{1} \left(
B-A\right)  ,$ and take their average as a lower bound on $\lambda_{1} \left(
B-A\right)  $. Set for short $C=B-A.$

For each $i\in\left[  k\right]  ,$ define a vector $\mathbf{s}^{(i)}%
:=(s_{1}^{(i)},\ldots,s_{n}^{(i)})$ as%
\[
s_{j}^{(i)}:=\left\{
\begin{array}
[c]{ll}%
-k+1, & \text{if }j\in V_{i},\medskip\\
1, & \text{if }j\in\left[  n\right]  \backslash V_{i}.
\end{array}
\right.
\]
Write $\left\langle \mathbf{u},\mathbf{v}\right\rangle $ for the inner product
of the vectors $\mathbf{u}$ and $\mathbf{v,}$ and note that for each
$i\in\left[  k\right]  ,$ Rayleigh's principle implies that
\[
\lambda_{1} \left(  C\right)  \left\Vert \mathbf{s}^{(i)}\right\Vert ^{2}%
\geq\left\langle C\mathbf{s}^{(i)},\mathbf{s}^{(i)}\right\rangle .
\]
Hence, summing these inequalities for all $i\in\left[  k\right]  ,$ we get%
\begin{equation}
\lambda_{1}\left(  C\right)  \sum\limits_{i\in\left[  k\right]  }\left\Vert
\mathbf{s}^{(i)}\right\Vert ^{2}\geq\sum\limits_{i\in\left[  k\right]
}\left\langle C\mathbf{s}^{(i)},\mathbf{s}^{(i)}\right\rangle .\label{i1}%
\end{equation}
On the one hand, for $\sum_{i\in\left[  k\right]  }\left\Vert \mathbf{s}%
^{(i)}\right\Vert ^{2}$ we have
\begin{equation}
\sum\limits_{i\in\left[  k\right]  }\left\Vert \mathbf{s}^{(i)}\right\Vert
^{2}=\sum\limits_{i\in\left[  k\right]  }\left(  \left(  k-1\right)
^{2}\left\vert V_{i}\right\vert +n-\left\vert V_{i}\right\vert \right)
=k\left(  k-1\right)  n.\label{i2}%
\end{equation}
On the other hand, for every $i\in\left[  k\right]  ,$ we see that
\[
\left\langle C\mathbf{s}^{(i)},\mathbf{s}^{(i)}\right\rangle =\left(
k-1\right)  ^{2}\Sigma C_{i,i}-\textcolor{blue}{2}\left(  k-1\right)  \sum\limits_{j\in\left[
k\right]  \backslash\left\{  i\right\}  }\Sigma C_{i,j}+\sum\limits_{j\in
\left[  k\right]  \backslash\left\{  i\right\}  }\Sigma \textcolor{blue}{C_{j,j}}+\sum
\limits_{j\in\left[  k\right]  \backslash\left\{  i\right\}  ,m\in\left[
k\right]  \backslash\left\{  i,j\right\}  \text{ }}\Sigma C_{j,m}.
\]
Summing these inequalities for all $i\in\left[  k\right]  ,$ we get four terms on the right side:
\begin{align*}
\left(  k-1\right)  ^{2}\sum\limits_{i\in\left[  k\right]  }\Sigma C_{i,i} &
=\left(  k-1\right)  ^{2}\left(  \Sigma C-\textcolor{blue}{2}\mathrm{cut}\left(  C,\mathcal{V}%
\right)  \right)  ,\\
-2\left(  k-1\right)  \sum\limits_{i\in\left[  k\right]  }\sum\limits_{j\in
\left[  k\right]  \backslash\left\{  i\right\}  }\Sigma C_{i,j} &  =-\textcolor{blue}{4}\left(
k-1\right)  \mathrm{cut}\left(  C,\mathcal{V}\right)  ,\\
\sum\limits_{i\in\left[  k\right]  }\sum\limits_{j\in\left[  k\right]
\backslash\left\{  i\right\}  }\Sigma \textcolor{blue}{C_{j,j}} &  =\left(  k-1\right)  \left(
\Sigma C-\textcolor{blue}{2}\mathrm{cut}\left(  C,\mathcal{V}\right)  \right)  \\
\sum\limits_{i\in\left[  k\right]  }\text{ }\sum\limits_{
j\in\left[  k\right] \backslash\left\{  i\right\} , 
\, m\in\left[  k\right]  \backslash\left\{ i,i\right\}  \text{ }
}\Sigma C_{j,m} &  = \textcolor{blue}{2}\left(  k-2\right) \mathrm{cut}%
\left(  C,\mathcal{V}\right)  .
\end{align*}
Hence, for $\sum_{i\in\left[  k\right]  }\left\langle C\mathbf{s}%
^{(i)},\mathbf{s}^{(i)}\right\rangle $ we obtain
\begin{align*}
\sum\limits_{i\in\left[  k\right]  }\left\langle C\mathbf{s}^{(i)}%
,\mathbf{s}^{(i)}\right\rangle  &  =\left(  k-1\right)  ^{2}\left(  \Sigma
C-\textcolor{blue}{2}\mathrm{cut}\left(  C,\mathcal{V}\right)  \right)  -\textcolor{blue}{4}\left(  k-1\right)
\mathrm{cut}\left(  C,\mathcal{V}\right)  \\
&  +\left(  k-1\right)  \left(  \Sigma C-\textcolor{blue}{2}\mathrm{cut}\left(  C,\mathcal{V}%
\right)  \right)  +\textcolor{blue}{2}\left(  k-2\right)  \mathrm{cut}\left(  C,\mathcal{V}%
\right)  \\
&  =k\left(  k-1\right)  \left(  \Sigma C-\textcolor{blue}{2}\mathrm{cut}\left(  C,\mathcal{V}%
\right)  \right)  -\textcolor{blue}{2}k\mathrm{cut}\left(  C,\mathcal{V}\right)  \\
&  =k\left(  k-1\right)  \left(  \Sigma C-\frac{\textcolor{blue}{2}k}{k-1}\mathrm{cut}\left(
C,\mathcal{V}\right)  \right)  .
\end{align*}
Combining the last equality with (\ref{i1}) and (\ref{i2}), we get%
\[
\lambda_{1} \left(  C\right)  k\left(  k-1\right)  n\geq k\left(  k-1\right)
\left(  \Sigma C-\frac{\textcolor{blue}{2}k}{k-1}\mathrm{cut}\left(  C,\mathcal{V}\right)
\right).
\]
Now, noting that%
\[
\Sigma C=\Sigma B-\Sigma A
\]
and%
\[
\mathrm{cut}\left(  C,\mathcal{V}\right)  =-\mathrm{cut}\left(  A,\mathcal{V}%
\right)  ,
\]
we complete the proof of (\ref{mcin}).
\end{proof}

The main advantage of Theorem \ref{mth} is the freedom of choice of the matrix $B$, as illustrated in the following corollary.

\begin{corollary}\label{coro:bo}
Let $G$ be a graph of order $n$ and size $m$, and let $\lambda_{n}$ be its
smallest adjacency eigenvalue, $\mu_{1}$ be its largest Laplacian eigenvalue,
and $q_{n}$ be its smallest signless Laplacian eigenvalue. Then%
\begin{align}
\mathrm{mc}_{k}\left(  G\right)    & \leq\frac{k-1}{k}\left(  m-\frac
{n\lambda_{n}}{2}\right)  \text{;}\label{bo1}\\
\mathrm{mc}_{k}\left(  G\right)    & \leq\frac{\left(  k-1\right)  n}{2k}%
\mu_{1}\text{;}\label{bo2}\\
\mathrm{mc}_{k}\left(  G\right)    & \leq\frac{k-1}{k}\left(  2m-\frac
{nq_{n}}{2}\right)  .\label{bo3}%
\end{align}

\end{corollary}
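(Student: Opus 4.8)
The plan is to specialize Theorem \ref{mth} to the case where $A$ is the adjacency matrix of $G$ and then exploit the freedom in the diagonal matrix $B$. First I would rearrange the inequality of Theorem \ref{mth} to isolate the cut term, obtaining
\[
\mathrm{cut}(A,\mathcal{V}) \leq \frac{(k-1)n}{2k}\left(\lambda_1(B-A) + \frac{1}{n}\Sigma A - \frac{1}{n}\Sigma B\right).
\]
Since the right-hand side does not depend on the partition $\mathcal{V}$, and since $\mathrm{cut}(A,\mathcal{V})$ is precisely the number of edges of the $k$-partite subgraph induced by $\mathcal{V}$, I would take the maximum over all $k$-partitions of $[n]$ to reach
\[
\mathrm{mc}_k(G) \leq \frac{(k-1)n}{2k}\left(\lambda_1(B-A) + \frac{1}{n}\Sigma A - \frac{1}{n}\Sigma B\right),
\]
valid for \emph{every} diagonal matrix $B$. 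Recording that $\Sigma A = 2m$ for the adjacency matrix fixes the common platform for all three bounds.

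Each of the three inequalities then follows from a single choice of $B$. For \eqref{bo1} I take $B=0$, so that $B-A=-A$; using $\lambda_1(-A)=-\lambda_n$ and $\Sigma B = 0$ gives $\mathrm{mc}_k(G) \leq \frac{(k-1)n}{2k}\left(-\lambda_n + \frac{2m}{n}\right)$, which simplifies to $\frac{k-1}{k}\left(m - \frac{n\lambda_n}{2}\right)$. For \eqref{bo2} I take $B=D$, the degree matrix, so that $B-A=L$ is the Laplacian; then $\lambda_1(L)=\mu_1$ and $\Sigma B = \sum_i d_i = 2m$, so the terms $\frac{1}{n}\Sigma A$ and $\frac{1}{n}\Sigma B$ cancel, leaving $\mathrm{mc}_k(G) \leq \frac{(k-1)n}{2k}\mu_1$. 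For \eqref{bo3} I take $B=-D$, so that $B-A=-(D+A)=-Q$; using $\lambda_1(-Q)=-q_n$ and $\Sigma B = -2m$ gives $\mathrm{mc}_k(G) \leq \frac{(k-1)n}{2k}\left(-q_n + \frac{4m}{n}\right)$, which rearranges to $\frac{k-1}{k}\left(2m - \frac{nq_n}{2}\right)$.

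The only facts needed beyond Theorem \ref{mth} are the identities $D-A=L$, $D+A=Q$, $\Sigma A = \Sigma D = 2m$, and the elementary spectral relation $\lambda_1(-M) = -\lambda_n(M)$ for a symmetric matrix $M$. I do not expect a genuine obstacle here: the entire content of the corollary is already packed into Theorem \ref{mth}, and the proof reduces to three substitutions followed by routine algebraic simplification. The one place to stay careful is the sign bookkeeping, since $\lambda_i$ denotes the $i$-th \emph{largest} eigenvalue, so I must consistently convert between the smallest eigenvalue of $A$ (respectively $Q$) and the largest eigenvalue of $-A$ (respectively $-Q$).
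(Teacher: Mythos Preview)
Your proposal is correct and follows exactly the same approach as the paper: apply Theorem \ref{mth} with the three choices $B=0$, $B=D$, and $B=-D$, using $\lambda_1(-A)=-\lambda_n$, $\lambda_1(-Q)=-q_n$, and $\Sigma A=\Sigma D=2m$. You have in fact spelled out the algebraic simplifications more explicitly than the paper does.
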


\begin{proof}
Let $A$ be the adjacency matrix of $G$. Bound (\ref{bo1}) follows from Theorem
\ref{mth} by letting $B$ be the zero diagonal matrix and noting that
$\lambda_{1}\left(  -A\right)  =-\lambda_{n}.$

Likewise, bound (\ref{bo2}) follows by letting $B=D$.

Finally,  bound (\ref{bo3}) follows by letting $B=-D$, and noting that
$\lambda_{1}\left(  -D-A\right)  =-q_{n}$.
\end{proof}

Note that bound (\ref{bo2}) has been proved in \cite{vanDam2016}, and bound (\ref{bo1} has been proved in \cite{niki2016}. The three bounds are incomparable and each of them can be the best one for a given class of graphs.

\begin{remark}\label{rem3} Equality holds for \eqref{bo1}, \eqref{bo2} and \eqref{bo3}  when $\mathbf{s}^{(i)}$ is an eigenvector to $\lambda_{n}(G), q_{n}(G)$ and $\mu_{1}(G)$ for each $i = 1,\ldots,k.$ This fact will be useful to find extremal graphs to  those upper bounds, which are described in Section 5.
\end{remark}

\section{Graphs with $\mathbf{s}^{(i)}$ as eigenvectors}
\noindent For each $i\in \left[  k\right]  ,$ define a vector $\mathbf{s}^{(i)}%
:=(s_{1}^{(i)},\ldots,s_{n}^{(i)})$ as%
\[
s_{j}^{(i)}:=\left\{
\begin{array}
[c]{ll}%
-k+1, & \text{if }j\in V_{i},\medskip\\
1, & \text{if }j\in\left[  n\right]  \backslash V_{i}.
\end{array}
\right.
\]
In this section, for each $i \in \left[  k\right]$ we obtain necessary and sufficient conditions for  $\mathbf{s}^{(i)}$
to be an eigenvector to an eigenvalue of the matrices $A$, $L$ and $Q$ of a graph $G.$ Notice that when $\mathbf{s}^{(i)}$ is an eigenvector to the smallest eigenvalue of $A,$ then equality holds in \eqref{bo1} of Corollary  \ref{coro:bo}. Also, the next results show us that the fact of $\mathbf{s}^{(i)}$ being an eigenvector is intrinsically related to the degree structure of the graph.

\begin{theorem}\label{Teo-kcut-forA}
Let $G=(V,E)$ be a graph and let $\{S_1, \ldots, S_k\}$ be a $k$-partition of $V(G)$ for $k \geq 2.$ For each $i \in [k],$    $\mathbf{s}^{(i)}$ is an eigenvector to the eigenvalue  $\lambda$ if and only if
$$d_{v_l} (S_i)- d_{v_l} (S_i, S_j) = \lambda$$
for each $v_{l} \in S_{i}$ and for each $j \ne i$ such that $\, j \in [k].$
\end{theorem}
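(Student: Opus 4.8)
The plan is to mimic the coordinate-by-coordinate matrix computation of Theorem \ref{thm:adjpartition}, now applied to the two-valued vector $\mathbf{s}^{(i)}$, which equals $-(k-1)$ on $S_i$ and $1$ on $\overline{S_i}:=\bigcup_{j\neq i}S_j$. Equivalently, one can view $\mathbf{s}^{(i)}$ as the bipartition vector of type $\{c_1,-c_2\}$ for the bipartition $\{\overline{S_i},S_i\}$ with $c_1=1$ and $c_2=k-1$, and read off the eigenvector conditions straight from Theorem \ref{Prop06-pond-part-vec-A}. Either way, the decisive step is to expand $(A\mathbf{s}^{(i)})_l$ by grouping the neighbours of $v_l$ according to the part in which they lie.

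First I would compute the coordinate at a vertex $v_l\in S_i$. Its $d_{v_l}(S_i)$ neighbours inside $S_i$ each contribute $-(k-1)$ and its remaining $\sum_{j\neq i}d_{v_l}(S_i,S_j)$ neighbours each contribute $1$, so
\[
(A\mathbf{s}^{(i)})_l=-(k-1)\,d_{v_l}(S_i)+\sum_{j\neq i}d_{v_l}(S_i,S_j),
\]
and the eigenequation $(A\mathbf{s}^{(i)})_l=\lambda s_l^{(i)}=-(k-1)\lambda$ rearranges to $\sum_{j\neq i}\bigl(d_{v_l}(S_i)-d_{v_l}(S_i,S_j)\bigr)=(k-1)\lambda$. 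For a vertex $v_l\in S_j$ with $j\neq i$, only its $d_{v_l}(S_j,S_i)$ neighbours in $S_i$ contribute $-(k-1)$ while the other $d_{v_l}-d_{v_l}(S_j,S_i)$ neighbours contribute $1$, giving $(A\mathbf{s}^{(i)})_l=d_{v_l}-k\,d_{v_l}(S_j,S_i)$, which the eigenequation forces to equal $\lambda$.

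For the sufficiency direction I would use that the hypothesis $d_{v_l}(S_i)-d_{v_l}(S_i,S_j)=\lambda$ makes $d_{v_l}(S_i,S_j)$ independent of $j$; summing the $k-1$ instances recovers the $S_i$-coordinate equation above, and reading the identical equalities at the vertices of the other parts makes every cross-degree out of a fixed vertex equal to the common value $(d_{v_l}-\lambda)/k$, which is exactly what the external equation $d_{v_l}-k\,d_{v_l}(S_j,S_i)=\lambda$ demands. For necessity I would run this backwards: fixing $v_l\in S_i$ and applying the external equation for each $j\neq i$ forces all cross-degrees $d_{v_l}(S_i,S_j)$ to that same value $(d_{v_l}-\lambda)/k$; substituting $d_{v_l}(S_i)=d_{v_l}-(k-1)(d_{v_l}-\lambda)/k$ back into $d_{v_l}(S_i)-d_{v_l}(S_i,S_j)$ then collapses to $\lambda$, recovering the stated condition for every $j$.

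The step I expect to require the most care is the link between the \emph{per-$j$} degree equalities and the eigenvector property. The eigenequation at a single vertex $v_l\in S_i$ controls only the aggregate $\sum_{j\neq i}d_{v_l}(S_i,S_j)$, so the refinement that each individual difference $d_{v_l}(S_i)-d_{v_l}(S_i,S_j)$ equals the same $\lambda$ is genuinely extra information; it is supplied precisely by the eigenequations at the vertices lying \emph{outside} $S_i$, where each cross-degree appears in isolation. Making this coupling explicit — and verifying that it is consistent across the parts so that one common $\lambda$ serves every coordinate — is the crux, and it is cleanest to organise the argument so that the equalities are invoked for all parts at once (equivalently, treating the family $\mathbf{s}^{(1)},\ldots,\mathbf{s}^{(k)}$ together) rather than for a single index $i$ in isolation.
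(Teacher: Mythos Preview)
Your argument is correct and complete; in particular you correctly identify that a \emph{single} eigenequation $A\mathbf{s}^{(i)}=\lambda\mathbf{s}^{(i)}$ only constrains the aggregate $\sum_{j\neq i}d_{v_l}(S_i,S_j)$ at $v_l\in S_i$, and that the per-$j$ identities come from looking at $v_l$ through the external coordinate of $\mathbf{s}^{(j)}$ for every $j\neq i$. The algebra you outline checks out in both directions.

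The paper takes a different tack. Rather than working with the raw vectors $\mathbf{s}^{(i)}$ and combining their coordinate equations by hand, it forms the differences $\mathbf{u}^{(ij)}=\tfrac{1}{k}(\mathbf{s}^{(i)}-\mathbf{s}^{(j)})$, which are $\{1,-1,0\}$ tripartition vectors for $\{S_i,S_j,V\setminus(S_i\cup S_j)\}$, and then simply invokes Theorem~\ref{thm:adjpartition}. In the forward direction this immediately yields $d_{v_l}(S_i)-d_{v_l}(S_i,S_j)=\lambda$ for each pair $i\neq j$, with no further algebra; in the converse, the hypothesis first forces all the cross-degrees $d_{v_l}(S_i,S_j)$ out of a fixed vertex to coincide (exactly as you observe), which verifies all three conditions of Theorem~\ref{thm:adjpartition} for every $\mathbf{u}^{(ij)}$, and then the $\mathbf{s}^{(i)}$ are recovered as linear combinations of these. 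Your direct computation (equivalently, the appeal to Theorem~\ref{Prop06-pond-part-vec-A}) is more self-contained and avoids routing through the $\{1,-1,0\}$ theory, at the cost of a little extra arithmetic; the paper's difference trick hides that arithmetic inside the already-proved tripartition result.
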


\begin{proof}
For $k=2$ the result holds by Theorem 4.1 of \cite{AL21}. For $k\geq 3$, let $\{S_1,\ldots, S_k\}$ be a $k$-partition of $V.$ For $i \neq j,$ consider the $n$-vector
$\mathbf{u}^{(ij)}$  given as

$$\mathbf{u}^{(ij)} = \frac{1}{k}(\mathbf{s}^{(i)}-\mathbf{s}^{(j)}).$$
Notice that
$$u^{(ij)}_r=\left\lbrace\begin{array}{ccl}
1&,& \mbox{ if } v_r \in S_i,\\
-1&,& \mbox{ if }  v_r \in  S_j,\\
0&,& \mbox{ if } v_r \in V \setminus \{S_i, S_j\}.
\end{array}\right.$$

Suppose that $(\lambda, \, \mathbf{s}^{(i)})$ and $(\lambda, \, \mathbf{s}^{(j)})$ are eigenpairs of $A$ for $i, \, j \in \{1,\ldots,k\}$ such that $i \ne j.$ In this case, $\mathbf{u}^{(ij)}$ is also an eigenvector to $\lambda.$ From Theorem \ref{thm:adjpartition}, there is a tripartition  $\{S_i, S_j, V\setminus \{S_i, S_j\} \}$ such that
equalities in \eqref{eq:adjpartition} hold, and the result follows.

Now, we prove the converse. By hypothesis, considering a vertex $v_{l} \in S_1,$ we have
\begin{eqnarray}
d_{v_l} (S_1)- d_{v_l} (S_1, S_2) &=& \lambda \nonumber \\
d_{v_l} (S_1)- d_{v_l} (S_1, S_3) &=& \lambda \nonumber \\
\vdots \nonumber \\
d_{v_l} (S_1)- d_{v_l} (S_1, S_k) &=& \lambda, \nonumber
\end{eqnarray}
which implies that $ d_{v_l} (S_1, S_2) = \cdots = d_{v_l} (S_1, S_k).$ Analogously, for each $v_{l} \in S_i$ for $i=2,\ldots,k,$ we obtain
$ d_{v_l} (S_i, S_{i+1}) = d_{v_l} (S_i, S_{i+2}) = \cdots = d_{v_l} (S_i, S_k).$ By Theorem \ref{thm:adjpartition}, the proof is complete.
\end{proof}

\begin{remark}\label{rem4} It is easy to see that Theorem \ref{Teo-kcut-forA} holds for weighted graphs with possible loops. This fact will be used to prove Corollaries \ref{Teo-kcut-forQ} and \ref{Teo-kcut-forL}.
\end{remark}

Figure \ref{Figure-TeoA} displays a graph $G$ and a partition of the vertex set into three subsets $S_1, S_2$ and $S_3$ such that
$$d_{v_l} (S_i)- d_{v_l} (S_i, S_j) = 0$$
for all vertices $v_{l} \in V(G)$ and $i, j \in \{1, 2, 3\}$ with $i\ne j.$
From Theorem \ref{Teo-kcut-forA},
$\lambda = 0$ is an eigenvalue of $A(G)$ of multiplicity at least $2$, with eigenvectors $$\; \mathbf{s^{(1)}} = [2,2,2,2,2,2,-1,\ldots,-1]^{t},$$  $$\;\mathbf{s^{(2)}} = [-1,-1,-1,-1,-1,-1, 2,2,2,2,2,2,-1,-1,-1,-1,-1]^{t}$$ and $\;\mathbf{s^{(3)}} =-\mathbf{s^{(1)}}-\mathbf{s^{(2)}}$. In a more general case, given a graph $G$ satisfying the hypothesis of Theorem \ref{Teo-kcut-forA}, the eigenvectors $\{\mathbf{s^{(1)}}, \ldots, \mathbf{s^{(k)}}\}$ show that the multiplicity of the correspondent eigenvalue is at least $(k-1)$.

\begin{figure}[!h]
\centering
\includegraphics[scale=0.2]{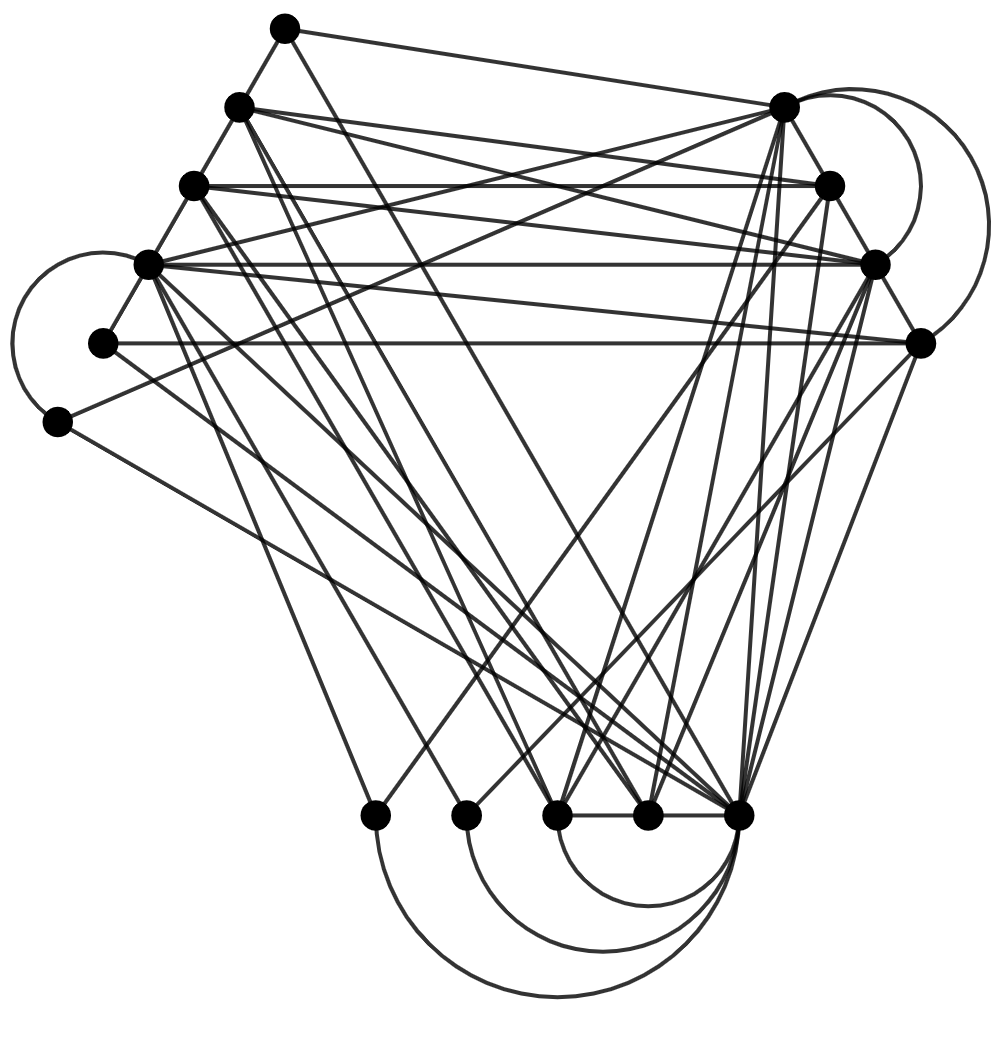}
\caption{Graph such that $ \mathbf{s}^{(1)}$ and  $\mathbf{s}^{(2)}$ belong to the eigenspace of $\lambda = 0$.}
\label{Figure-TeoA}
\end{figure}

%
%
 %
%
%


\begin{corollary}\label{Teo-kcut-forQ}
Let $G=(V,E)$ be a graph and let  $\{S_1, \ldots, S_k\}$ be a $k$-partition of $V(G)$ for $k \geq 2.$ Then, $(q , \mathbf{s}^{(i)})$ is an eigenpair of $Q$ if and only if
$$d_{v_l}(G)+d_{v_l}(S_i)-d_{v_l}(S_i,S_j)=q,\text{ }\forall i,j \in [k], i\neq j\text{ and } v_l \in S_i.$$
\end{corollary}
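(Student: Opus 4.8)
The plan is to reduce this statement to the adjacency case exactly as Theorem \ref{thm:slpartition} was reduced to Theorem \ref{thm:adjpartition}. Starting from $G$, I would form the auxiliary weighted graph $G'$ by attaching at every vertex $v_l$ a loop of weight $w_{ll} = \sum_{v_j \sim v_l} w_{lj} = d_{v_l}(G)$; the point of this construction is that the adjacency matrix of $G'$ satisfies $A(G') = D(G) + A(G) = Q(G) = Q$. Hence $(q, \mathbf{s}^{(i)})$ is an eigenpair of $Q$ if and only if it is an eigenpair of $A(G')$, and the latter can be tested with the $k$-partition criterion of Theorem \ref{Teo-kcut-forA}, which by Remark \ref{rem4} remains valid for weighted graphs carrying loops.

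Applying Theorem \ref{Teo-kcut-forA} to $G'$ with the partition $\{S_1,\ldots,S_k\}$, and writing $d'_{v_l}(X)$ and $d'_{v_l}(X,Y)$ for the degrees measured in $G'$, I obtain that $(q,\mathbf{s}^{(i)})$ is an eigenpair of $Q$ exactly when $d'_{v_l}(S_i) - d'_{v_l}(S_i,S_j) = q$ holds for every $v_l \in S_i$ and every $j \in [k]$ with $j \neq i$. It then remains to rewrite these primed degrees in terms of the original graph $G$. Since a loop never joins two distinct parts, one has $d'_{v_l}(S_i,S_j) = d_{v_l}(S_i,S_j)$; and since the loop at $v_l$ lies inside $G'[S_i]$ and carries weight $d_{v_l}(G)$, one has $d'_{v_l}(S_i) = d_{v_l}(S_i) + d_{v_l}(G)$. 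Substituting these two identities turns the criterion into $d_{v_l}(G) + d_{v_l}(S_i) - d_{v_l}(S_i,S_j) = q$, which is precisely the claimed condition; both implications are delivered at once because Theorem \ref{Teo-kcut-forA} is an equivalence.

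The argument is essentially bookkeeping, so I do not expect a genuine obstacle; the only point that needs care is the loop-degree translation $d'_{v_l}(S_i) = d_{v_l}(S_i) + d_{v_l}(G)$. Here I would make explicit the decomposition $d_{v_l}(G) = d_{v_l}(S_i) + \sum_{t \neq i} d_{v_l}(S_i,S_t)$ for $v_l \in S_i$, which shows $d'_{v_l}(S_i) = 2 d_{v_l}(S_i) + \sum_{t \neq i} d_{v_l}(S_i,S_t)$ and confirms that the loop contributes its full weight $d_{v_l}(G)$ (rather than twice that weight) to the $G'$-degree, in agreement with the convention already fixed in the proof of Theorem \ref{thm:slpartition}.
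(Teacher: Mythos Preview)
Your proposal is correct and follows exactly the paper's own argument: form $G'$ by adding a loop of weight $d_{v_l}(G)$ at each vertex so that $A(G')=Q(G)$, apply Theorem~\ref{Teo-kcut-forA} (via Remark~\ref{rem4}), and translate the primed degrees back using $d'_{v_l}(S_i,S_j)=d_{v_l}(S_i,S_j)$ and $d'_{v_l}(S_i)=d_{v_l}(S_i)+d_{v_l}(G)$. Your extra remark on the loop-degree convention is a welcome clarification but not a departure from the paper's method.
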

\begin{proof}
Let $G^{\prime}$ be the graph obtained from $G$ by adding a loop on each vertex $v \in V$ with a weight equal to the degree of the vertex $v$ on $G.$ Thus, $Q(G) = A(G^{\prime}).$
Applying Theorem \ref{Teo-kcut-forA} (with Remark \ref{rem4}) to the adjacency matrix of the graph $G^{\prime}$, $(q, \mathbf{s}^{(i)})$ is an eigenpair of $A^{\prime}(G)$ if and only if
$$d^{\prime}_{v_l} (S_i)- d^{\prime}_{v_l}(S_i,S_j) = q,\text{ }\forall i,j \in [k], i\neq j\text{ and } v_l \in S_i.$$
Since $d_{v_l}(S_i,S_j) = d^{\prime}_{v_l}(S_i,S_j)$ and $d^{\prime}_{v_l} (S_i) = d_{v_l}(S_i) + d_{v_l}(G),$ the result follows.
\end{proof}

Using the argument of Corollary \ref{Teo-kcut-forQ}, we obtain the following result for the Laplacian matrix of $G.$ Note that in this case, the Laplacian eigenvalue $\mu$ associated with eigenvector $\textbf{s}^{(i)}$ should be an integer multiple of $k.$

\begin{corollary}\label{Teo-kcut-forL} Let $G=(V,E)$ be a graph  and $\{S_1, \ldots, S_k\}$ be a $k$-partition of $V(G)$ such that $k \geq 2.$ For each $i=1,\ldots,k,$   $(\mu, \mathbf{s}^{(i)})$ is an eigenpair of $L$ if and only if the graph induced by $E(S_i,S_j)$ is $\frac{\mu}{k}$-regular.
\end{corollary}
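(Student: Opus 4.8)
The plan is to follow the reduction used for Corollary \ref{Teo-kcut-forQ}, writing the Laplacian as the adjacency matrix of an auxiliary weighted graph with loops and then quoting Theorem \ref{Teo-kcut-forA} through Remark \ref{rem4}. Since $L(G)=D(G)-A(G)$, I would take $G''$ to be the weighted graph obtained from $G$ by reweighting each edge to $-1$ and attaching to every vertex $v_l$ a loop of weight $d_{v_l}(G)$. Then the diagonal of $A(G'')$ stores the degrees and the off-diagonal part stores $-1$ on the edges of $G$, so that $A(G'')=D(G)-A(G)=L(G)$. Hence $(\mu,\mathbf{s}^{(i)})$ is an eigenpair of $L$ exactly when it is an eigenpair of $A(G'')$, and Theorem \ref{Teo-kcut-forA} becomes applicable to the weighted graph $G''$.

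Applying that theorem and writing $d''$ for weighted degrees in $G''$, the pair $(\mu,\mathbf{s}^{(i)})$ is an eigenpair if and only if $d''_{v_l}(S_i)-d''_{v_l}(S_i,S_j)=\mu$ for every $v_l\in S_i$ and every $j\neq i$. The step I expect to be the main obstacle is the loop bookkeeping, where a sign slip is easy: the loop at $v_l$ sits inside $G''[S_i]$ and contributes its full weight $d_{v_l}(G)$, while the internal edges contribute $-1$ each, so $d''_{v_l}(S_i)=d_{v_l}(G)-d_{v_l}(S_i)$; on the other hand no loop meets the edge-induced bipartite subgraph $G''[E(S_i,S_j)]$, giving $d''_{v_l}(S_i,S_j)=-\,d_{v_l}(S_i,S_j)$. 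Substituting these and using $d_{v_l}(G)-d_{v_l}(S_i)=\sum_{m\neq i}d_{v_l}(S_i,S_m)$, the eigen-condition turns into $\sum_{m\neq i}d_{v_l}(S_i,S_m)+d_{v_l}(S_i,S_j)=\mu$ for every $v_l\in S_i$ and every $j\neq i$.

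It then remains to read off regularity from this family of $k-1$ equations, which is the second delicate point. Subtracting the equations indexed by two distinct $j,j'\neq i$ yields $d_{v_l}(S_i,S_j)=d_{v_l}(S_i,S_{j'})$, so the cross-degrees of each $v_l\in S_i$ are independent of $j$; feeding this back gives $k\,d_{v_l}(S_i,S_j)=\mu$, that is, every vertex of $S_i$ has exactly $\mu/k$ neighbours in each $S_j$. Reading the eigen-equation at the vertices lying outside $S_i$ (equivalently, evaluating the rows of $A(G'')$ indexed by $S_j$, where the loop term cancels against $d_{v_l}(G)=d_{v_l}$) gives the matching identity $d_{v_l}(S_j,S_i)=\mu/k$ for $v_l\in S_j$, so both vertex classes of $G[E(S_i,S_j)]$ have all their degrees equal to $\mu/k$ and the edge-induced bipartite graph is $\mu/k$-regular, as claimed. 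Finally, since $\mu/k$ is realised as a vertex degree it must be a nonnegative integer, which recovers the divisibility remark that $\mu$ is a multiple of $k$.
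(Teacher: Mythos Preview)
Your proposal is correct and follows essentially the same route as the paper: realise $L(G)$ as the adjacency matrix of the weighted graph with edge weights $-1$ and loops of weight $d_{v_l}(G)$, invoke Theorem~\ref{Teo-kcut-forA} via Remark~\ref{rem4}, translate the weighted degrees back to $G$, and deduce $k\,d_{v_l}(S_i,S_j)=\mu$. Your loop bookkeeping is in fact spelled out more carefully than in the paper. One small redundancy: your separate step of ``reading the eigen-equation at the vertices lying outside $S_i$'' is not needed, since the condition in Theorem~\ref{Teo-kcut-forA} is already stated for every $i\in[k]$, so interchanging the roles of $i$ and $j$ immediately gives $d_{v_l}(S_j,S_i)=\mu/k$ for $v_l\in S_j$ without a fresh computation.
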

\begin{proof}
Let $G^{\prime}$ be the graph obtained from $G$ by assigning edge weights -1, and also adding a self-loop to each vertex with weight equal to the vertex degree. In this case, we have $A(G^{\prime}) = L(G).$ Applying Theorem \ref{Teo-kcut-forA} (with Remark \ref{rem4}) we obtain that $(\mu, \mathbf{s}^{(i)})$ is an eigenpair of $A(G^{\prime})$ if and only if
$$d^{\prime}_{v_l} (S_i)- d^{\prime}_{v_l}(S_i,S_j) = \mu,\text{ }\forall i,j \in [k], i\neq j\text{ and } v_l \in S_i.$$

Notice that for $i,j,r \in [k]$

$$d_{v_l}(S_i,S_j)=d^{\prime}_{v_l}(S_i,S_j)=\mu-d^{\prime}_{v_l}(S_i)=d^{\prime}_{v_l}(S_i,S_r)=d_{v_l}(S_i,S_r).$$

\noindent Then, for all $i,j \in [k]$, $i\neq j$ and $v_l \in S_i$, we have

$$d_{v_l}(G) - d_{v_l} (S_i)-(- d_{v_l}(S_i,S_j)) = \mu,$$
$$(d_{v_l}(S_i)+\sum_{r\neq i}d_{v_l}(S_i,S_r)) - d_{v_l} (S_i)+ d_{v_l}(S_i,S_j) = \mu,$$
$$k \, d_{v_l}(S_i,S_j) = \mu, $$
$$d_{v_l}(S_i,S_j) = \frac{\mu}{k}.$$

\noindent Therefore, the graph induced by $E(S_i,S_j)$ is $\frac{\mu}{k}$-regular. Since the converse is straightforward, this completes the proof.
\end{proof}

From Corollary \ref{Teo-kcut-forL}, we can easily prove the following results.

\begin{corollary}
Let $G=(V,E)$ be a graph  and $\{S_1, \ldots, S_k\}$ be a $k$-partition of $V(G)$ such that $k \geq 2.$ For each $i \in \{1, \ldots,k-1\},$ if $(\mu, \mathbf{s}^{(i)}))$ is an eigenpair of $L$, then $|S_i|=|S_j|$ for each $j \in \{1, \ldots, k\}.$
\end{corollary}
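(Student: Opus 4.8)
The plan is to use Corollary \ref{Teo-kcut-forL} to reduce the claim to a simple double-counting argument about the regular bipartite graphs induced by the edge sets $E(S_i,S_j)$. By that corollary, the hypothesis that $(\mu,\mathbf{s}^{(i)})$ is an eigenpair of $L$ is equivalent to the statement that for every $j\neq i$ the edge-induced graph $G[E(S_i,S_j)]$ is $\frac{\mu}{k}$-regular. I would first set $r:=\mu/k$ and fix an index $j\neq i$, so that every vertex of $S_i$ has exactly $r$ neighbors in $S_j$ and, symmetrically, every vertex of $S_j$ has exactly $r$ neighbors in $S_i$.

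The key step is then to count the edges of $E(S_i,S_j)$ in two ways. Summing the degrees from the $S_i$ side gives $|E(S_i,S_j)| = r\,|S_i|$, while summing from the $S_j$ side gives $|E(S_i,S_j)| = r\,|S_j|$. Equating these yields $r\,|S_i| = r\,|S_j|$, and since $\mathbf{s}^{(i)}$ being an eigenvector to $L$ forces $\mu\neq 0$ (otherwise $\mathbf{s}^{(i)}$ would be constant, contradicting its definition with entries $-k+1$ and $1$), we have $r=\mu/k\neq 0$. Hence I may cancel $r$ to conclude $|S_i|=|S_j|$. Running this argument over all $j\in\{1,\ldots,k\}\setminus\{i\}$ gives $|S_i|=|S_j|$ for every such $j$, which is exactly the desired conclusion (the case $j=i$ being trivial).

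The only genuinely delicate point is the non-vanishing of $r$, i.e. ruling out $\mu=0$. I would handle this by observing that if $\mu=0$, then $r=0$ means $G[E(S_i,S_j)]$ has no edges for each $j\neq i$, so $S_i$ is disconnected from the rest of $G$; in that degenerate situation the stated cardinality equality need not hold, so one should assume (as is implicit in the corollary's intended use, where $\mathbf{s}^{(i)}$ is a nontrivial eigenvector associated with a nonzero eigenvalue) that $\mu\neq 0$. I expect this edge case to be the main obstacle in making the statement precise; the counting argument itself is routine. Under the natural reading $\mu\neq 0$, the proof is a one-line edge count per pair $(i,j)$ followed by cancellation.
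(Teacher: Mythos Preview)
Your argument is correct and is exactly the route the paper intends: it states the corollary as an immediate consequence of Corollary~\ref{Teo-kcut-forL} without further proof, and the one-line double count of $|E(S_i,S_j)|$ from each side of the $\tfrac{\mu}{k}$-regular bipartite graph is the obvious way to extract the equal-size conclusion. Your flagging of the $\mu=0$ edge case is a valid observation about an implicit hypothesis the paper leaves unstated; apart from that, there is nothing to add.
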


\begin{corollary}\label{coro}
Let $G=(V,E)$ be a graph  and $\{S_1, \ldots, S_k\}$ be a $k$-partition of $V(G)$ such that $k \geq 2.$ For each $i \in \{1, \ldots,k-1\},$ if $(\mu, \mathbf{s}^{(i)}))$ is an eigenpair of $L$, then the graph induced by $\bigcup_{i\neq j} E(S_i,S_j)$ is $\frac{\mu (k-1)}{k}$-regular.
\end{corollary}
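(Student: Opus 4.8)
The plan is to reduce the statement to Corollary \ref{Teo-kcut-forL} and finish with a short degree count. By hypothesis $(\mu, \mathbf{s}^{(i)})$ is an eigenpair of $L$ for every $i \in \{1, \ldots, k-1\}$, so Corollary \ref{Teo-kcut-forL} shows that the bipartite graph induced by $E(S_i, S_j)$ is $\frac{\mu}{k}$-regular for every $j \neq i$. Since bipartite $\frac{\mu}{k}$-regularity is a two-sided condition, this means that for each such $i$ every vertex of $S_i$ has exactly $\frac{\mu}{k}$ neighbours in each $S_j$ with $j\neq i$, and, simultaneously, every vertex of $S_j$ has exactly $\frac{\mu}{k}$ neighbours in $S_i$.

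The key observation is that this already determines the number of cross-neighbours of \emph{every} vertex, including those of the single part $S_k$ to which no eigenvector is attached. Indeed, any pair of distinct indices $\{i,j\} \subseteq [k]$ contains at least one index $\leq k-1$; applying Corollary \ref{Teo-kcut-forL} with that index (and using $E(S_i,S_j)=E(S_j,S_i)$) shows that $E(S_i,S_j)$ is $\frac{\mu}{k}$-regular for \emph{all} unordered pairs of distinct parts. Equivalently, since $\sum_{i=1}^{k}\mathbf{s}^{(i)}=\mathbf{0}$, the vector $\mathbf{s}^{(k)}=-\sum_{i=1}^{k-1}\mathbf{s}^{(i)}$ is a linear combination of eigenvectors sharing the eigenvalue $\mu$ and is therefore itself an eigenvector for $\mu$, so the corollary applies to $i=k$ as well.

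It then remains to count degrees in $H = G[\bigcup_{i\neq j} E(S_i,S_j)]$. For an arbitrary vertex $v \in S_p$, its degree in $H$ is $\sum_{j\neq p} d_v(S_p,S_j)$; by the previous step each of the $k-1$ summands equals $\frac{\mu}{k}$, whence $\deg_H(v)=(k-1)\frac{\mu}{k}=\frac{\mu(k-1)}{k}$, and $H$ is $\frac{\mu(k-1)}{k}$-regular. The only genuinely delicate point is precisely the treatment of $S_k$: the hypothesis supplies $\mathbf{s}^{(1)},\ldots,\mathbf{s}^{(k-1)}$ but not $\mathbf{s}^{(k)}$, so one must justify that the cross-degrees of the vertices of $S_k$ are pinned down as well. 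Either argument in the second paragraph closes this gap, after which the computation is routine.
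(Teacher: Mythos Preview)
Your argument is correct and follows the route the paper intends: it states that Corollary~\ref{coro} is an easy consequence of Corollary~\ref{Teo-kcut-forL}, and your reduction---observing that $\mathbf{s}^{(k)}=-\sum_{i<k}\mathbf{s}^{(i)}$ so that all $k$ vectors are eigenvectors, then summing the $\frac{\mu}{k}$-regularity of each crossing bipartite piece---is exactly that deduction. Your explicit treatment of the part $S_k$ is a point the paper leaves implicit, so if anything your write-up is slightly more careful than the original.
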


It is worth to note that, in the conditions of all previous results (Theorem \ref{Teo-kcut-forA}, Corollaries \ref{Teo-kcut-forQ} and \ref{Teo-kcut-forL}), for $i,j,r \in [k]$ and $j\neq i \neq r$, we have
$$d_{v_l}(S_i,S_j)=d_{v_l}(S_i,S_r), \mbox{ for each  } v_l\in S_i.$$

This equation can be used to construct a counterexample to the converse of Corollary \ref{coro}.

\section{Infinite families of extremal graphs}

\noindent An infinite family of extremal graphs for \eqref{bo1} were obtained at \cite{niki2016}. In this section, we construct new infinite families of extremal graphs for inequalities \eqref{bo1}, \eqref{bo2} and \eqref{bo3}. Figures \ref{Figure-ExampleA} and \ref{Figure-ExampleQ} display non-trivial graphs where $\mathbf{s}^{(i)}$ is an eigenvector, for $i = 1,2,3$, for the matrix $A$ and $Q$ respectively, but they are not for other matrices. 

\subsection{Extremal graphs for inequality  \eqref{bo1}}\

Next, Corollary \ref{coro:thm41} is a straightforward consequence of Theorem \ref{Teo-kcut-forA}, which we will use to build the extremal graphs.
\begin{corollary}
\label{coro:thm41}
Let $G=(V,E)$ be a graph and let $\{S_1, \ldots, S_k\}$ be a $k$-partition of $V(G)$ for $k \geq 3,$ and $H$ a proper subgraph of $G$ with a $k$-partition $\{S^{\prime}_1, \ldots, S^{\prime}_k\}$ of $V(H)$ such that $S^{\prime}_i \, \subset \,  S_i$, for $i =1,\ldots,k$. If both graphs, $G$ and $H$, satisfy the premises of Theorem \ref{Teo-kcut-forA} for the eigenvalues $\lambda$ and zero respectively, then $G^{\prime}=G-E(H)$ with the $k$-partition $\{S_1, \ldots, S_k\}$ of $V(G)$ also satisfies the premises of Theorem \ref{Teo-kcut-forA} for the eigenvalue $\lambda$.
\end{corollary}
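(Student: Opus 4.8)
The plan is to verify the degree equations of Theorem~\ref{Teo-kcut-forA} directly for $G'=G-E(H)$, relating each degree computed in $G'$ to the corresponding degrees in $G$ and $H$. Throughout, I would write $d^{G}$, $d^{H}$, and $d^{G'}$ to indicate in which graph a degree is measured. Fix $i\in[k]$, an index $j\neq i$, and a vertex $v_l\in S_i$; the goal is to show $d^{G'}_{v_l}(S_i)-d^{G'}_{v_l}(S_i,S_j)=\lambda$, which by Theorem~\ref{Teo-kcut-forA} is exactly the premise for $G'$ at the eigenvalue $\lambda$.

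The first step is the key combinatorial observation that the two partitions are aligned: since the sets $S'_1,\ldots,S'_k$ are pairwise disjoint with $S'_r\subset S_r$, we have $V(H)\cap S_r=S'_r$ for every $r$. Consequently, for a vertex $v_l\in S'_i$, the edges of $H$ incident to $v_l$ that land inside $S_i$ are precisely those counted by $d^{H}_{v_l}(S'_i)$, and those landing in $S_j$ are precisely those counted by $d^{H}_{v_l}(S'_i,S'_j)$. Since $G'$ is obtained from $G$ by deleting exactly the edges of $H$, this yields
\[
d^{G'}_{v_l}(S_i)=d^{G}_{v_l}(S_i)-d^{H}_{v_l}(S'_i),\qquad d^{G'}_{v_l}(S_i,S_j)=d^{G}_{v_l}(S_i,S_j)-d^{H}_{v_l}(S'_i,S'_j).
\]

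Next I would split into two cases. If $v_l\in S'_i$, subtracting the two displayed identities and regrouping gives $d^{G'}_{v_l}(S_i)-d^{G'}_{v_l}(S_i,S_j)=\big(d^{G}_{v_l}(S_i)-d^{G}_{v_l}(S_i,S_j)\big)-\big(d^{H}_{v_l}(S'_i)-d^{H}_{v_l}(S'_i,S'_j)\big)$; the first bracket equals $\lambda$ by the premise for $G$ and the second equals $0$ by the premise for $H$ at eigenvalue zero, so the difference is $\lambda$. If instead $v_l\in S_i\setminus S'_i$, then $v_l\notin V(H)$, so no edge incident to $v_l$ is deleted and $d^{G'}_{v_l}=d^{G}_{v_l}$ on every set; the premise for $G$ then gives $\lambda$ directly. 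As the choice of $i$, $j$, and $v_l$ was arbitrary, all equations of Theorem~\ref{Teo-kcut-forA} hold for $G'$, completing the proof.

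I expect no deep difficulty here; the one point requiring care is the bookkeeping in the first step, namely guaranteeing that the deleted $H$-edges incident to a vertex $v_l\in S'_i$ are correctly partitioned between the within-$S_i$ count and the between-$S_i$-and-$S_j$ count. This is exactly what the alignment $V(H)\cap S_r=S'_r$ secures, so once that identity is in place the cancellation $\lambda-0=\lambda$ is automatic.
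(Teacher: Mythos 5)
Your proof is correct and follows essentially the same route as the paper: express the degrees in $G'=G-E(H)$ as differences of the corresponding degrees in $G$ and $H$, regroup, and cancel $\lambda-0=\lambda$ using the two premises. In fact you are slightly more careful than the paper, since you make explicit both the partition alignment $V(H)\cap S_r=S'_r$ and the case $v_l\in S_i\setminus S'_i$ (where no incident edge is deleted), which the paper's computation leaves implicit.
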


\begin{proof}
Since both $G$ and $H$ satisfy the premises of Theorem \ref{Teo-kcut-forA} for the eigenvalues $\lambda$ and zero respectively, we have
$$d_{v_l}^H(S^{\prime}_i)-d_{v_l}^H(S^{\prime}_i,S^{\prime}_j)=0,$$
$$d_{v_l}^G(S_i)-d_{v_l}^G(S_i,S_j)=\lambda$$
for any $v_l \in S^{\prime}_i\subset S_i.$
Thus, for $G^{\prime}$
\begin{align*}
d_{v_l}^{G^{\prime}}(S_i)-d_{v_l}^{G^{\prime}}(S_i,S_j)&=(d_{v_l}^{G}(S_i)-d_{v_l}^{H}(S^{\prime}_i))-(d_{v_l}^G(S_i,S_j)-d_{v_l}^H(S^{\prime}_i,S^{\prime}_j)),\\
d_{v_l}^{G^{\prime}}(S_i)-d_{v_l}^{G^{\prime}}(S_i,S_j)&=(d_{v_l}^{G}(S_i)-d_{v_l}^G(S_i,S_j))-(d_{v_l}^{H}(S^{\prime}_i)-d_{v_l}^H(S^{\prime}_i,S^{\prime}_j)),\\
d_{v_l}^{G^{\prime}}(S_i)-d_{v_l}^{G^{\prime}}(S_i,S_j)&=\lambda.
\end{align*}
\noindent From Theorem \ref{Teo-kcut-forA}, the conclusion follows.
\end{proof}

Write $K_r$ for the complete graph on $r$ vertices, $C_r$ for the cycle on $r$ vertices with edge set $E(C_r).$ Now, we are able to build an infinite family of non-regular graphs attaining  equality in \eqref{bo1}.

\begin{proposition}
For $k\geq 3$ and $r\geq k+4,$ let $G^{\prime} = \overline{k(K_r-E(C_r))} - E(K_{k,k}).$ Then,
\begin{equation}\label{prop:Aextremal}
mc_{k}(G^{\prime}) = \left(\frac{k-1}{k} \right) \left(m-\frac{ n \lambda_n(G)}{2} \right).
\end{equation}
\end{proposition}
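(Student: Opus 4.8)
The plan is to place $G'$ inside the framework of Corollary \ref{coro:thm41} and then pin down its smallest adjacency eigenvalue by a direct spectral decomposition. First I would read off the structure of $G := \overline{k(K_r - E(C_r))}$: it is the join of $k$ disjoint copies of $C_r$, so with the partition $\{S_1,\ldots,S_k\}$ into those cycles every vertex has exactly $2$ neighbours inside its own part and exactly $r$ neighbours in each other part. Hence $d_{v_l}(S_i)-d_{v_l}(S_i,S_j)=2-r$ for all $v_l\in S_i$ and all $j\neq i$, so by Theorem \ref{Teo-kcut-forA} the graph $G$ satisfies the premises with $\lambda=2-r$. Next I would exhibit the removed $K_{k,k}$ as a subgraph $H$ meeting the hypotheses of Corollary \ref{coro:thm41}: in each $S_i$ pick two cycle-adjacent vertices $a_i,b_i$, set $S_i'=\{a_i,b_i\}$, and take the colour classes of $K_{k,k}$ to be $\{a_1,\ldots,a_k\}$ and $\{b_1,\ldots,b_k\}$. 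A one-line count gives $d^H_{v_l}(S_i')=1=d^H_{v_l}(S_i',S_j')$ for every $v_l\in S_i'$, so $H$ satisfies the premises with eigenvalue $0$. Since $S_i'\subsetneq S_i$ (here $r\geq k+4\geq 7$), Corollary \ref{coro:thm41} yields that $G'=G-E(H)$ satisfies the premises of Theorem \ref{Teo-kcut-forA} with $\lambda=2-r$; equivalently, each $\mathbf{s}^{(i)}$ is an eigenvector of $A(G')$ for the eigenvalue $2-r$.

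The main obstacle is to prove that $2-r$ is in fact the smallest eigenvalue $\lambda_n(G')$, which is exactly what Remark \ref{rem3} needs for equality in \eqref{bo1}. Here I would exploit the part-symmetry. Writing $P$ for the adjacency matrix of the path obtained from $C_r$ by deleting $a_ib_i$, and $M=e_0e_1^{\top}+e_1e_0^{\top}$ for the symmetric matrix encoding the two between-part edges deleted from each pair (where $e_0,e_1$ are the indicator vectors of the two special positions), the whole matrix factors as
\[
A(G') = I_k\otimes P + (J_k-I_k)\otimes(J_r-M).
\]
Since $I_k$ and $J_k-I_k$ commute, this block-diagonalises over the eigenspaces of $J_k$ into one copy of $B_+ := P+(k-1)(J_r-M)$ and $k-1$ copies of $B_- := P-(J_r-M)$. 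The key simplification is that $P+M=A(C_r)$, so $B_-=A(C_r)-J_r$; diagonalising this circulant gives eigenvalues $2-r$ (on $\mathbf{1}_r$, recovering the $\mathbf{s}^{(i)}$) and $2\cos(2\pi j/r)\in[-2,2)$ for $j\neq 0$, whose minimum is $2-r$ as soon as $r\geq 5$.

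It remains to bound $B_+=A(C_r)-kM+(k-1)J_r$ from below. Dropping the positive semidefinite term $(k-1)J_r$ and applying Weyl's inequality twice gives
\[
\lambda_{\min}(B_+) \geq \lambda_{\min}(A(C_r)) + \lambda_{\min}(-kM) \geq -2-k,
\]
using $\lambda_{\min}(A(C_r))\geq -2$ and $\lambda_{\max}(M)=1$. This is precisely where the hypothesis $r\geq k+4$ enters: it guarantees $2-r\leq -2-k\leq\lambda_{\min}(B_+)$, so neither block contributes an eigenvalue below $2-r$. Combining the two blocks gives $\lambda_n(G')=2-r$.

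Finally I would invoke Remark \ref{rem3}: since every $\mathbf{s}^{(i)}$ is an eigenvector of $A(G')$ to $\lambda_n(G')=2-r$, all the Rayleigh estimates in the proof of Theorem \ref{mth} are tight for the partition $\{S_1,\ldots,S_k\}$, so \eqref{bo1} holds with equality. With $n=kr$, $m=|E(G')|$ and $\lambda_n=2-r$ this is exactly \eqref{prop:Aextremal}. I expect the spectral step — and in particular the Weyl bound on $B_+$ that forces the threshold $r\geq k+4$ — to be the crux; the two reductions feeding Corollary \ref{coro:thm41} are routine degree counts.
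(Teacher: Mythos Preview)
Your argument is correct and the first half --- the degree counts placing $G$ and the deleted $K_{k,k}$ into Corollary~\ref{coro:thm41} --- coincides with the paper's proof essentially verbatim. The spectral step, however, is handled differently. The paper works at the level of the full $n\times n$ matrices: it applies Weyl's inequality (Theorem~\ref{weyltheo}) to $A(G')=A(G)+A(-H)$ to obtain $\lambda_{n-k+1}(G')\geq\lambda_{n-k+1}(G)-\lambda_1(H)\geq -2-k\geq 2-r$, and then invokes So's equality characterisation to upgrade this to a strict inequality. You instead exploit the part-symmetry to block-diagonalise $A(G')$ as $I_k\otimes P+(J_k-I_k)\otimes(J_r-M)$, reducing the problem to two $r\times r$ matrices $B_\pm$; the circulant $B_-=A(C_r)-J_r$ is handled exactly, and Weyl is applied only to the residual block $B_+$. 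Both routes arrive at the same threshold $r\geq k+4$ via the same estimate $\lambda_{\min}(A(C_r))-k\lambda_{\max}(M)\geq -2-k$, but your decomposition is more explicit: it identifies the full eigenspace of $2-r$ as $\{v\otimes\mathbf 1_r:v\perp\mathbf 1_k\}$ and, because you only need $\lambda_n(G')\geq 2-r$ (the reverse inequality being automatic), it sidesteps the appeal to So's theorem entirely.
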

\begin{proof} For $k\geq 3$ and $r\geq k+4,$  let $G = \overline{k(K_r-E(C_r))}.$
We consider the vertex partition of $G$ into $k$ vertices sets $\{S_1,\ldots,S_k\}$ where $|S_{i}| = r$, $G[S_i] = C_r,$ and we label each vertex in $S_i$ as $V(S_{i}) = \{v_{1,i},\ldots, v_{r,i}\}$ for all $i=1,\ldots,k.$ It is not difficult to see that $\lambda_1(G) = r(k-1)+2$, $\lambda_{n-k+2}(G) = \lambda_{n}(G) = 2-r$ are the largest and the smallest eigenvalues of $G,$ respectively. Also, $\;2 \cos\left( \frac{2\pi j}{r}\right)$ are eigenvalues of $G$ of multiplicity $k$ for each $j=1, \ldots, r-1.$ We have
$$d_{v_l}(S_i)-d_{v_l}(S_i,S_j)= 2 - r$$
for any $v_{l} \in S_i$ of graph $G.$
From Theorem \ref{Teo-kcut-forA}, the vectors $\mathbf{s}%
^{\left(  j\right)  }$ associated with the $k$-partition $\left\{
S_{1},\ldots,S_{k}\right\}  $ are eigenvectors to the smallest eigenvalue.
%
%
Now, let $H^{\prime}=K_{k,k}$ be a complete bipartite graph that is a subgraph of $G$ such that $V(H^{\prime}) = (v_{1,1},\, v_{r,1}, \,\ldots, v_{1,i},\, v_{r,i},\,\ldots, v_{1,k},\, v_{r,k}).$ Now, considering the remaining $n-2k$ vertices of $G$, let $H = H^{\prime} \cup (n-2k)K_{1}.$
%
Note that
$$d^{H}_{v_l}(S_i)-d^{H}_{v_l}(S_i, S_j)= 0$$
for any $v_l \in S_i$ of graph $H.$ From Theorem \ref{Teo-kcut-forA}, each vector  $\mathbf{s}^{(i)}$ for $i=1,\ldots, k$ is an eigenvector to  eigenvalue $0$.
Corollary \ref{coro:thm41} implies that $G^{\prime}=G-E(H)$ has eigenvalue  $(2-r)$ with eigenvectors $\mathbf{s}^{(i)}$, for $i=1,\ldots, k$, associated to the $k$-partition $\{S_1,\ldots,S_k\}$ from $G$. Next, we need to prove that   $\lambda_{n-k+1}(G^{\prime}) > 2-r.$

Since $\lambda_{n-k+1}(G) = 2\cos ( \frac{2\pi j}{r}) \geq -2$ and $r \geq k+4,$
by Theorem \ref{weyltheo}, we get
\begin{eqnarray}
\lambda_{n-k+1}(G^{\prime}) &\geq& \lambda_{n-k+1}(G) + \lambda_{n}(-H), \nonumber \\
&=& \lambda_{n-k+1}(G) -  \lambda_{1}(H), \nonumber \\
& \geq & -2-k  \nonumber \\
& \geq & 2-r. \nonumber
\end{eqnarray}
Since $G$ is regular, $\left(  1,\ldots,1\right)^{T}$ is an  eigenvector to $\lambda_{1}\left(  G\right).$ Note that an eigenvector to $\lambda_{1}(H)$ cannot be an eigenvector to $G$ since its entries are equal to either 1 or 0, and it is not orthogonal to the all ones vector. By the condition for equality of the Theorem \ref{weyltheo}, we obtain that
$$\lambda_{n-k+1}(G^{\prime}) > 2-r.$$
Thus, $\mathbf{s}^{(i)}$ is an eigenvector to the smallest eigenvalue of $G^{\prime}.$ From  Remark \ref{rem3}, the result follows.
\end{proof}
Figure \ref{Figure-ExampleA} shows a graph $G=\overline{3(K_7-E(C_7))}$ and a subgraph $H=K_{3,3}$ in red.
\begin{figure}[ht]
\centering
\includegraphics[scale=0.20]{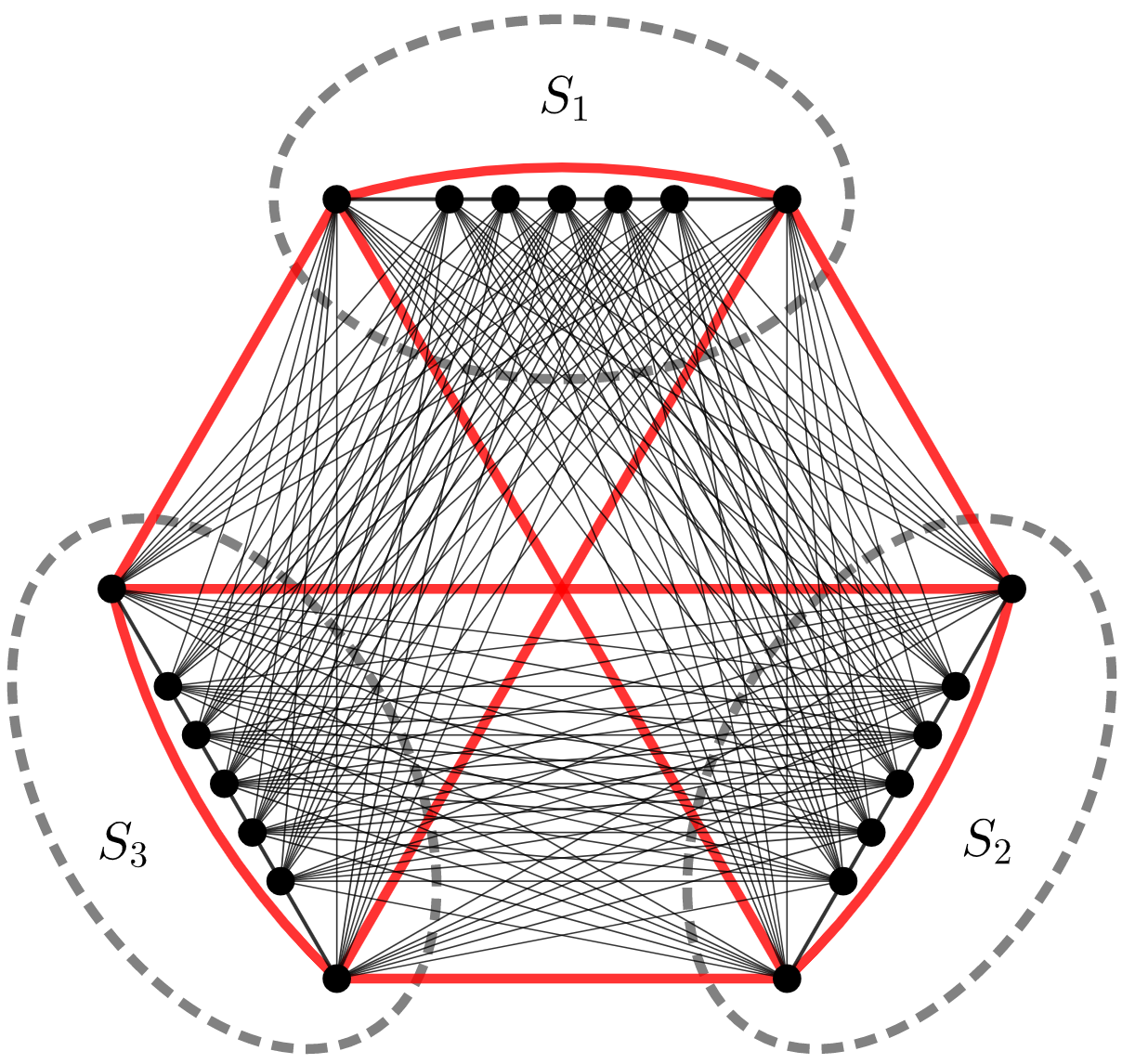}
\caption{Graphs  $G=\overline{3(K_7-E(C_7))}$ and $H=K_{3,3}$ in red.}
\label{Figure-ExampleA}
\end{figure}
The graph $G^{\prime}$ is a non-regular example where $\mathbf{s}^{(i)}$, for $i=1,\ldots,k$, are all eigenvectors to the smallest eigenvalue of $A$ that is equal to $-5.$ It is worth mentioning that $\mathbf{s}^{(i)}$ is not an eigenvector neither to $L$ nor to $Q.$

\subsection{Extremal graphs for inequality  \eqref{bo3}}\

Let $n_1$ be an even number and $n = 3n_1+4.$
Let $K_{n_1, n_1+2, n_1+2}$ be the complete 3-partite graph with parts $S_1, S_2$ and $S_3$ such that $|S_1| = n_1$ and $|S_2| = |S_3| = n_1 + 2.$ Consider labeling the vertex set of $H$ as $V(S_1) = \{v_{1,1}, \ldots, v_{1,n_1} \}$ and
$V(S_i) = \{v_{i,1}, v_{i,2}, \ldots, v_{i,n_{1}+2}\}$ for $i=2,3.$
Define the following edge sets:
\begin{eqnarray}
E_1 &=& \{(v_{1,1}, v_{1,2}), (v_{1,3}, v_{1,4}), \ldots, (v_{1,n_1-1}, v_{1,n_1}) \}, \nonumber\\
E_i &=& \{ (v_{i,1}, v_{i,2}), (v_{i,3}, v_{i,4}), \ldots, (v_{i,n_1+1}, v_{i,n_1+2})\}, \mbox{ for  } i = 2,3.  \nonumber
\end{eqnarray}
Also, consider the cycle $C$ with edge set given by
$$E(C) = \{ (v_{2,1}, v_{3,1}), (v_{3,1}, v_{2,2}), (v_{2,2}, v_{3,2})  \ldots, (v_{3,n_1+2}, v_{2,1})\}.$$
Now, let $H = (V_{H}, E_{H})$ be the graph with $V_{H} = V(K_{n_1, n_1+2, n_1+2})$ and edge set
$$E(H) = E(K_{n_1, n_1+2, n_1+2}) \cup E_1 \cup E_2 \cup E_3.$$
Note that $H$ can be written as the following join of regular graphs:
$H = \frac{n_1}{2} K_2 \vee \frac{(n_1+2)}{2} \, K_2 \vee \frac{(n_1+2)}{2} \, K_2$. Using Theorem 2.1 of \cite{Maria2010}, we obtain that $q_{n}(H) = n_1+2.$

Let $F\supset K_{n_1, n_1+2, n_1+2}$ be the supergraph with vertex set $V(F) = V(K_{n_1, n_1+2, n_1+2})$ and edge set $E(F) = E_1 \cup E(C).$
Figure \ref{Figure-ExampleQ} displays the subgraph $F$ and its edges in red, and the graph $H$ with its red and black edges when $n_1=4$.
\begin{figure}[!h]
\centering
\includegraphics[scale=0.2]{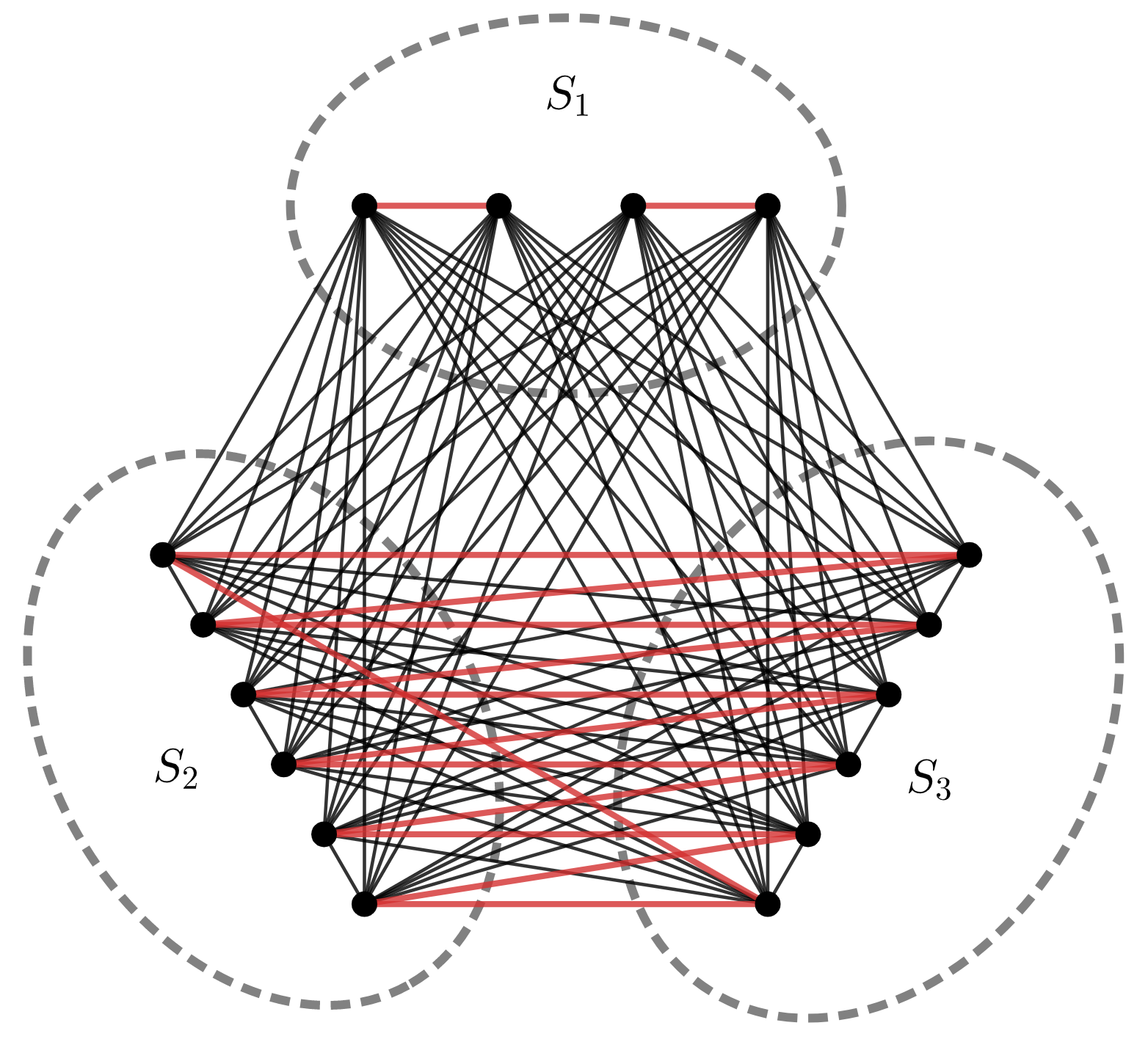}
\caption{Graphs $H$ and $F$ (induced by the red edges) when $n_1 = 4.$ }
\label{Figure-ExampleQ}
\end{figure}

Next, we prove that the third largest $Q$-eigenvalue of the complement of the subgraph induced by sets $S_2$ and $S_3$ minus the edges $E(C)$ is less than $n_1+4.$

\begin{proposition}\label{prop53}
Let $\,n_1 \geq 4\,$ be even and let $G = H[S_2, S_3] \setminus E(C)$ be a graph on $n$ vertices such that $n = 2n_1 + 4.$  Then,
$$q_{3}(\overline{G}) < n_1+4.$$
\end{proposition}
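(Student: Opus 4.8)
The plan is to exploit the fact that $\overline{G}$ turns out to be a \emph{regular} graph, which converts the signless Laplacian question into an adjacency eigenvalue question governed by Weyl's inequality. First I would describe $\overline{G}$ explicitly. In $G$ the only edges inside $S_2$ (resp.\ $S_3$) are the perfect matching $E_2$ (resp.\ $E_3$), which is a perfect matching precisely because $n_1$ is even, and the edges between $S_2$ and $S_3$ form the complete bipartite graph minus the cycle $C$. Complementing on $S_2\cup S_3$ therefore gives: inside each part, $K_{n_1+2}$ minus a perfect matching; between the parts, exactly the edges of $C$, i.e.\ the cycle $C_n$ with $n=2n_1+4$. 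Counting degrees, every vertex has $n_1$ neighbours inside its own part and $2$ neighbours along the cycle, so $\overline{G}$ is $(n_1+2)$-regular. Hence $Q(\overline{G})=(n_1+2)I+A(\overline{G})$ and $q_3(\overline{G})=(n_1+2)+\lambda_3(\overline{G})$, so it suffices to prove $\lambda_3(\overline{G})<2$.

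Next I would split the adjacency matrix as $A(\overline{G})=B+A_C$, where $B$ is the block-diagonal matrix carrying the two within-part graphs and $A_C$ is the adjacency matrix of the cycle $C_n$. Each diagonal block equals $J-I-M$, where $J$ is all ones and $M$ is the adjacency matrix of the matching, so its spectrum is immediate: the all-ones vector gives eigenvalue $n_1$; the $(-1)$-eigenvectors of $M$ give eigenvalue $0$ with multiplicity $(n_1+2)/2$; and the remaining $(+1)$-eigenvectors orthogonal to $\mathbf{1}$ give eigenvalue $-2$. Thus the eigenvalues of $B$ are $n_1$ (multiplicity $2$), $0$ (multiplicity $n_1+2$) and $-2$ (multiplicity $n_1$); in particular $\lambda_3(B)=0$. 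Moreover $\lambda_1(A_C)=2$, since $C_n$ is $2$-regular and connected.

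Applying Weyl's inequality (Theorem \ref{weyltheo}) with $i=3$, $j=1$ gives $\lambda_3(\overline{G})=\lambda_3(B+A_C)\le \lambda_3(B)+\lambda_1(A_C)=0+2=2$. The remaining, and most delicate, step is to upgrade this to a \emph{strict} inequality. Here I would invoke the equality clause of So's theorem inside Theorem \ref{weyltheo}: equality would force a single nonzero vector to be simultaneously an eigenvector of $A_C$ for $\lambda_1(A_C)=2$ and of $B$ for $\lambda_3(B)=0$. But $\lambda_1(A_C)=2$ is simple, with the constant vector $\mathbf{1}$ as its unique eigenvector (the cycle is connected), whereas $B\mathbf{1}=n_1\mathbf{1}\ne 0$, so $\mathbf{1}$ is not a $0$-eigenvector of $B$. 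No common eigenvector exists, equality fails, and therefore $\lambda_3(\overline{G})<2$, which yields $q_3(\overline{G})=(n_1+2)+\lambda_3(\overline{G})<n_1+4$.
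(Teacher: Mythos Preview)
Your argument is correct and follows essentially the same route as the paper: decompose $A(\overline{G})$ as the disjoint union of two cocktail-party graphs $\overline{\tfrac{n_1+2}{2}K_2}$ plus the cycle $C_{2n_1+4}$, use regularity to pass between $Q$ and $A$, apply Weyl with $\lambda_3(B)=0$ and $\lambda_1(A_C)=2$, and invoke So's equality clause to get strictness. Your treatment of the strictness step is in fact a bit more precise than the paper's, since you correctly identify that the obstruction is that the unique $\lambda_1(A_C)$-eigenvector $\mathbf{1}$ lies in the $n_1$-eigenspace of $B$ rather than the $0$-eigenspace.
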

\begin{proof}
The graph $\overline{G}$ can be written as the union $\overline{\frac{n_1+2}{2}K_2} \, \cup \, \overline{\frac{n_1+2}{2}K_2} \, \cup \, E(C).$ Note that $\overline{G}$ is $(n_1+2)$-regular, so $q_1(\overline{G}) = n_1+2.$ For $S = S_2$ and $\overline{S} = S_3$, we have
\begin{eqnarray}
d_{v_i}(S) - d_{v_i}(S,\overline{S}) &=& n_1-2, \mbox{ for } v_i \in S, \nonumber \\
d_{v_i}(\overline{S}) - d_{v_i}(\overline{S}, S) &=& n_1-2, \mbox{ for } v_i \in \overline{S}. \nonumber
\end{eqnarray}
By Theorem 4.1 of \cite{AL21}, the partition vector $p \in \{-1,1\}$ is an eigenvector to the eigenvalue $n_1 - 2.$
Also, from Theorem \ref{weyltheo},
$$\lambda_3(\overline{G}) \leq \lambda_3\left(\overline{\tfrac{n_1+2}{2} K_2} \, \cup \, \overline{\tfrac{n_1+2}{2}K_2}\right) + \lambda_1(C_{2n_1+4}) = 2. $$
Since $\lambda_1(C_{2n_1+4})$ and $\lambda_3(\overline{G})$ do not share the same eigenvector $(1, \ldots, 1)^{T}$, then by the condition for equality of Theorem \ref{weyltheo}, $$\lambda_3(\overline{G}) <2.$$ For $r$-regular graphs, it is well-known that $q_i(G) = \lambda_{i}(G) + r.$ So, $q_1(\overline{G}) = 2n_1+4, q_2(\overline{G}) = 2n_1$ and $q_3(\overline{G}) < n_1+4.$ The proof is complete.
\end{proof}

\begin{proposition}
Let $\,n_1 \geq 4\,$ be even. Let $G=(V,E)$ be a  graph on $\,n = 3n_1 + 4\,$ vertices and $m$ edges such that $V(G) = V(H)$ and $E(G)=E(H) \setminus E(F).$ Then,
\begin{equation}
mc_{k}(G) = \left(\frac{k-1}{k} \right) \left( 2m-\frac{ n q_n(G)}{2} \right).
\end{equation}
\end{proposition}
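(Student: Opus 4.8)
\emph{The plan} is to reduce the claim to the equality condition recorded in Remark \ref{rem3}. Since the construction uses three parts, here $k=3$, and it suffices to prove that each of $\mathbf{s}^{(1)},\mathbf{s}^{(2)},\mathbf{s}^{(3)}$ associated with the tripartition $\{S_1,S_2,S_3\}$ is an eigenvector of $Q(G)$ \emph{to its smallest eigenvalue}, and that this smallest eigenvalue equals $n_1+2$. First I would carry out the degree bookkeeping in $G=H-E(F)$. Deleting $E_1$ empties $G[S_1]$ while leaving the complete join between $S_1$ and $S_2\cup S_3$ intact, and deleting $E(C)$ removes from each vertex of $S_2\cup S_3$ exactly two of its neighbours across the $S_2$--$S_3$ bipartition. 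A short computation then gives, for $v\in S_1$, $d_v(G)=2n_1+4$, $d_v(S_1)=0$ and $d_v(S_1,S_2)=d_v(S_1,S_3)=n_1+2$, while for $v\in S_2$ (and symmetrically for $S_3$) one obtains $d_v(G)=2n_1+1$, $d_v(S_2)=1$ and $d_v(S_2,S_1)=d_v(S_2,S_3)=n_1$. Substituting these into the criterion of Corollary \ref{Teo-kcut-forQ} yields $d_v(G)+d_v(S_i)-d_v(S_i,S_j)=n_1+2$ for every $v\in S_i$ and every $j\neq i$, so $(n_1+2,\mathbf{s}^{(i)})$ is an eigenpair of $Q(G)$ for $i=1,2,3$.

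It remains to show $q_n(G)=n_1+2$, which is the heart of the argument. I would exploit that the two deletions turn $G$ into a join of regular graphs: $G=\overline{K_{n_1}}\vee G'$, where $G'=G[S_2\cup S_3]$ is exactly the graph $H[S_2,S_3]\setminus E(C)$ of Proposition \ref{prop53} and is $(n_1+1)$-regular on $2n_1+4$ vertices. Applying Theorem 2.1 of \cite{Maria2010} to this join expresses the $Q$-spectrum of $G$ through the adjacency spectrum of $G'$: a family of eigenvalues equal to $2n_1+4$ (from vectors supported on $S_1$ with zero sum), the family $\{\,2n_1+1+\lambda_j(G'):j\ge 2\,\}$ (from the non-principal eigenvectors of $A(G')$, which are orthogonal to the all-ones vector and hence unaffected by the join), and the two roots $4n_1+4$ and $n_1+2$ of the $2\times2$ quotient on the space of part-constant vectors. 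Since $2n_1+4$ and $4n_1+4$ both exceed $n_1+2$, the smallest $Q$-eigenvalue is governed by the middle family, namely by $2n_1+1+\lambda_{\min}(G')$.

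The final step is to evaluate $\lambda_{\min}(G')$, and this is where Proposition \ref{prop53} enters. A direct check shows that $\mathbf{1}_{S_2}-\mathbf{1}_{S_3}$ is an eigenvector of $A(G')$ with eigenvalue $1-n_1$, so $\lambda_{\min}(G')\le 1-n_1$. For the reverse inequality I would pass to the complement: $\overline{G'}$ is $(n_1+2)$-regular, so its adjacency eigenvalues are $n_1+2$ (principal) together with $\{-1-\lambda_j(G'):j\ge2\}$, whence the second largest eigenvalue of $\overline{G'}$ equals $-1-\lambda_{\min}(G')$. Proposition \ref{prop53} and its proof record $q_2(\overline{G'})=2n_1$, i.e. $\lambda_2(\overline{G'})=n_1-2$, and $q_3(\overline{G'})<n_1+4$, i.e. $\lambda_3(\overline{G'})<2$; this shows $n_1-2$ is genuinely the second largest eigenvalue of $\overline{G'}$ and is isolated from the rest, so $-1-\lambda_{\min}(G')=n_1-2$ and hence $\lambda_{\min}(G')=1-n_1$. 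Therefore the middle family attains its minimum $2n_1+1+(1-n_1)=n_1+2$, so $q_n(G)=n_1+2$, and Remark \ref{rem3} closes the proof of equality in \eqref{bo3}.

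I expect the main obstacle to be this last step: establishing $\lambda_{\min}(G')=1-n_1$ rather than merely $\lambda_{\min}(G')\le 1-n_1$. Exhibiting the eigenvector $\mathbf{1}_{S_2}-\mathbf{1}_{S_3}$ is immediate, but ruling out any strictly smaller adjacency eigenvalue of $G'$ is exactly the content of Proposition \ref{prop53}; the strict bound $\lambda_3(\overline{G'})<2$ is what certifies that $n_1-2$ is the isolated second eigenvalue of $\overline{G'}$, and the hypothesis $n_1\ge4$ is used here so that $n_1-2\ge 2$ separates it from $\lambda_3(\overline{G'})$. By contrast, the degree computation, the appeal to the join spectrum, and the final invocation of Remark \ref{rem3} are routine.
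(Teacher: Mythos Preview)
Your proof is correct, and it takes a genuinely different route from the paper's to establish $q_n(G)=n_1+2$. The paper also begins with Corollary~\ref{Teo-kcut-forQ} to see that $(n_1+2,\mathbf{s}^{(i)})$ are eigenpairs, but then works on the \emph{complement} side: it writes $\overline{G}=K_{n_1}\cup\hat G$ (with $\hat G=\overline{G'}$ in your notation), reads off $q_3(\overline{G})=2n_1-2$ from Proposition~\ref{prop53} together with the $Q$-spectrum of $K_{n_1}$, and then applies Weyl's inequality to $Q(G)=Q(K_n)-Q(\overline{G})$ to get $q_{n-2}(G)\ge (n-2)-(2n_1-2)=n_1+4$, forcing $q_{n-1}(G)=q_n(G)=n_1+2$. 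That Weyl step requires $2n_1-2\ge n_1+4$, i.e.\ $n_1\ge 6$, so the paper handles $n_1=4$ by a separate computational check.

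Your approach instead exploits the join decomposition $G=\overline{K_{n_1}}\vee G'$ and Theorem~2.1 of \cite{Maria2010} to write down the entire $Q$-spectrum of $G$, reducing the question to $\lambda_{\min}(G')\ge 1-n_1$, which you extract from Proposition~\ref{prop53} via $\lambda_2(\overline{G'})=n_1-2$. This is a bit more structural and has the pleasant side effect of working uniformly for all even $n_1\ge 4$, with no separate base case. (Incidentally, you prove slightly more than needed: since $n_1+2$ already appears as a quotient eigenvalue, the inequality $\lambda_{\min}(G')\ge 1-n_1$ alone suffices; the exact equality $\lambda_{\min}(G')=1-n_1$ is a bonus.) Both arguments ultimately rest on Proposition~\ref{prop53}; the paper feeds it into a Weyl bound, while you feed it into the explicit join spectrum.
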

\begin{proof}
A computational check shows that the result holds for $G$ when $n_1 = 4.$  Suppose that $n_1 \geq 6.$ From Corollary \ref{Teo-kcut-forQ}, $n_1+2$ is an eigenvalue for $Q(G)$ with multiplicity at least $2$, with eigenvectors $\mathbf{s}^{(j)}$ for $j=1, 2, 3.$ In order to obtain that $G$ is extremal to \eqref{bo3}, we need to prove that $q_{n}(G) = n_1+2.$

Observe that $\overline{G} = K_{n_1} \cup \hat{G},$ where $\hat{G} = \overline{H[S_2, S_3] \setminus E(C)}.$ From Proposition \ref{prop53},
$q_1(\overline{G}) = q_1(K_n) = 2n-2,$ $q_2(\overline{G}) = 2n_1,$ and $q_3(\overline{G}) = 2n_1-2 \geq n_1+4 > q_3(\hat{G}),$ for $n_1 \geq 6.$
Applying Theorem \ref{weyltheo} to $Q(G) = Q(K_n) - Q(\overline{G}),$ we have
\begin{eqnarray}
q_{n-2}(G) &\geq& q_{n}(K_{n}) - q_{3}(\overline{G})  \nonumber \\
&=& 3n_1+4-2-(2n_1-2) \nonumber \\
&=& n_1+4. \nonumber
\end{eqnarray}
So, $q_{n-1}(G) = q_{n}(G) = n_1+2,$ and the result follows by Remark \ref{rem3}.
\end{proof}

\subsection{Extremal graphs for inequality  \eqref{bo2}}

\vspace{0.3cm}
Constructing an infinite family of extremal graphs for inequality \eqref{bo2}, in the Laplacian case, is not  difficult if we observe the result of Corollary \ref{Teo-kcut-forL}.

Let $r\geq 2$ and let  $G=K_{r,\ldots, r}$ be the complete multipartite graph with $k \geq 3$ parts each of them of size $r.$ Notice that $\mu_1(G) = n.$ The graph induced by $E(S_{i}, S_{j})$ is $r$-regular, which implies from Corollary \ref{Teo-kcut-forL} that $\mathbf{s}^{(i)}$ is an eigenvector to $\mu_1(G).$ Let $G^{\prime}$ be the graph obtained from $G$ by adding an  edge. We get that $\mu_1(G^{\prime}) = n$ and the graph induced by $E(S_{i}, S_{j})$ in $G^{\prime}$ is still  $r$-regular, which implies from Corollary \ref{Teo-kcut-forL} that $\mathbf{s}^{(i)}$ is also an eigenvector to $\mu_1(G^{\prime}).$ Since $\mathbf{s}^{(i)}$ is an eigenvector to the largest Laplacian eigenvalue, then equality holds in \eqref{bo2} for both $G$ and $G^{\prime}$. This procedure can be applied many times to obtain non-isomorphic and non-regular graphs that force equality in \eqref{bo2}.

In general, suppose that $G$ is a graph such that $\{S_1,\ldots,S_k\}$ is a $k$-partition of its vertex set $V(G),$ and suppose that $(\mu_{1}(G), \mathbf{s}^{(i)})$ is an eigenpair of $L$ for each $i=1,\ldots,k.$ Now, let $G^{\prime}$ be obtained from $G$ by removing edges in $E(S_i)$, for any $i \in \{1,\ldots, k\}$, in any way. From Corollary \ref{Teo-kcut-forL},  $(\mu_{1}(G),\mathbf{s}^{(i)})$ is also an eigenpair of $L$ in $G^{\prime}$ since  by interlacing  $\mu_1(G')=\mu_1(G)$. Thus, $G$ and $G^{\prime}$ force equality in \eqref{bo2}.

\begin{remark}\label{rem5} It is worth mentioning that all results are also valid for weighted graphs with possible loops.
\end{remark}

\vspace{0.5cm}

\noindent \textbf{Acknowledgements: } The authors would like to thank the anonymous referee for his/her valuable comments/suggestions that helped us to improve the paper. The research of Leonardo de Lima was partly funded by the CNPq grant number 315739/2021--5.


\begin{thebibliography}{12}

\bibitem{AL21} J. Alencar, L. de Lima.  On graphs with adjacency and signless Laplacian matrix eigenvectors entries in $\{-1, +1\}$, \emph{Linear Algebra and its Applications} \textbf{614}  (2021) 301--315.

\bibitem{caputo2019} J.G. Caputo, I. Khames and A. Knippel, On graph Laplacian eigenvectors with components in $\{-1,0,1\},$ \emph{Discrete Applied Mathematics} \textbf{269}  (2019) 120--129.


\bibitem{Maria2010} M. A. de Freitas, N.M.M. de Abreu, R.R. Del-Vecchio, S. Jourkiewicz, Infinite families of $Q$-integral graphs, \emph{Linear Algebra and its Applications}  \textbf{432} (2010) 2352--2360.


\bibitem{niki2016} V. Nikiforov, Max $k$-cut and the smallest eigenvalue, \emph{Linear Algebra and its Applications} \textbf{504}  (2016) 462–467.

\bibitem{So94}W. So. Commutativity and spectra of Hermitian matrices, \emph{Linear Algebra and its Applications} \textbf{212--213} (1994), 121--129.

\bibitem{dragan2016} D. Stevanovi\'{c}, On $\pm 1$ eigenvectors of graphs,  \emph{Ars Mathematica Contemporanea} \textbf{11}  (2016) 415--423.

\bibitem{vanDam2016} E.R. van Dam, R. Sotirov, New bounds for the max-$k$-cut and chromatic number of a graph, \emph{Linear Algebra and its Applications} \textbf{488} (2016) 216--234.

\bibitem{wilf} H. Wilf,  Spectral bounds for the clique and independence numbers of graphs, \emph{Journal of Combinatorial Theory}, Series B \textbf{40:1} (1986) 113--117.
\end{thebibliography}
\end{document}